\theoremstyle{theorem}
\newtheorem{theorem}{Theorem}
\newtheorem{proposition}{Proposition}
\newtheorem{lemma}{Lemma}
\theoremstyle{definition}
\newtheorem*{remark}{Remark}
\newtheorem*{remarks}{Remarks}
\newtheorem*{claim}{Claim}
\newtheorem{problem}{Problem}
\newcommand{\lr}[1]{\left\langle {#1} \right\rangle}
\newcommand{\cavg}[1]{\frac 1q \sum_{{#1}\in \mathbb{F}_q}}
\begin{document}

\title{Sums of Powers in Large Finite Fields: \\ A Mix of Methods}
\markright{Finite Field Method Mix}
\author{Vitaly Bergelson, Andrew Best, and Alex Iosevich}

\maketitle

\begin{abstract}
Can any element in a sufficiently large finite field be represented as a sum of two $d$th powers in the field? In this article, we recount some of the history of this problem, touching on cyclotomy, Fermat's last theorem, and diagonal equations. Then, we offer two proofs, one new and elementary, and the other more classical, based on Fourier analysis and an application of a nontrivial estimate from the theory of finite fields. In context and juxtaposition, each will have its merits.
\end{abstract}
\section{Introduction.} We denote by $\mathbb{F}_q$ the finite field with $q$ elements, where $q$ is a power of a prime.
Consider the following problem, which has been attacked several times in different centuries, and just two of its extensions.
\begin{problem}\label{two elts; no coeffs} Fix an integer $d > 1$. Show that for any sufficiently large finite field $\mathbb{F}_q$, we have
\begin{equation*} \mathbb{F}_q \ = \ \{ x^d + y^d : x, y \in \mathbb{F}_q\},
\end{equation*}
or, in other words, every element of $\mathbb{F}_q$ is a sum of two $d$th powers.
\end{problem}
\begin{problem}\label{two elts; with coeffs} Fix an integer $d > 1$. Show that for any sufficiently large finite field $\mathbb{F}_q$, we have for every $a,b \in \mathbb{F}_q^\times$ that
\begin{equation*} \mathbb{F}_q \ = \ \{ ax^d + by^d : x, y \in \mathbb{F}_q\}.
\end{equation*}
\end{problem}
\begin{problem} \label{count solutions} Fix integers $n > 1$ and $k_1,\ldots,k_n > 0$ . Given an element $b \in \mathbb{F}_q$ and coefficients $a_1, \ldots, a_n \in \mathbb{F}_q^\times$, determine the number of solutions $(x_1,\ldots, x_n) \in \mathbb{F}_q^n$ to the \emph{diagonal equation}
\begin{equation}\label{diag eqn} a_1x_1^{k_1} + \cdots + a_nx_n^{k_n} \ = \ b.
\end{equation}
\end{problem}
\begin{remarks} \begin{enumerate}
\item Looking at Problems~\ref{two elts; no coeffs}~and~\ref{two elts; with coeffs} with fresh eyes, one might first observe that since $x^{q-1} = 1$ for all $x \in \mathbb{F}_q^\times$, it follows that $\{x^{q-1} + y^{q-1} : x,y\in\mathbb{F}_q\} = \{0,1,2\}$, which is usually not all of $\mathbb{F}_q$. Thus, one should expect to require $q > d+1$, hence the ``sufficiently large'' in the formulation. As we will see, in the case $d = 2$, this requirement is unnecessary.
\item The progression in difficulty of the problems should be clear, with Problem~\ref{count solutions} a significant step up. Indeed, when $n=2$ and $k_1=k_2 = d$, knowing merely that the number of solutions to equation~\eqref{diag eqn} is positive for each choice of $a_1$, $a_2$, $b$ suffices to solve Problem~\ref{two elts; with coeffs}. However, even the more modest jump from Problem~\ref{two elts; no coeffs} to Problem~\ref{two elts; with coeffs} can pose a challenge in that some approaches that work for the former seem unable to be easily upgraded to work for the latter. 
\item According to Small, Kaplansky privately conjectured that the ``outrageous'' statement of Problem~\ref{two elts; no coeffs} holds \cite{small}. We will see what role the article \cite{small} occupies in the panoply of results we will survey shortly.
\end{enumerate}
\end{remarks}
Mathematicians who have been caught on paper being interested in diagonal equations include Lagrange, H. M. Weber,\footnote{Be careful with the initials. A contemporary of both the sociologist M. Weber and the physicist H. F. Weber, the latter of whose lectures' lack of Maxwell's equations spurred Einstein to spurn his mentorship, the mathematician H. M. Weber worked on algebra, analysis, and number theory, incorporating many of his results into his well-regarded textbook \textit{Lehrbuch der Algebra}. According to Schappacher in \cite{schap}, Weber and Dedekind ``[took] a decisive step towards the creation of modern algebraic geometry'' with their publication of \cite{dw}.} Cauchy, Skolem, Gauss, V. A. Lebesgue,\footnote{Not the famous analyst!} Dickson,\footnote{A prolific author, L. E. Dickson wrote (at least) two important books, namely the first comprehensive book on finite fields and the three-volume \textit{History of the Theory of Numbers}. In a book review, he wrote, ``Fricke's \textit{Algebra} is a worthy successor to Weber's \textit{Algebra}, which it henceforth displaces'' \cite{bookreview}.} Hurwitz,\footnote{Here we write ``Hurwitz'' to mean the well-known A. Hurwitz, who collaborated with his generally overshadowed older brother J. Hurwitz on complex continued fractions.} and Weil. 
In the process of proving his celebrated four-square theorem, Lagrange showed in \cite{lag} that, given any $b \in \mathbb{F}_p^\times$, we have $\mathbb{F}_p = \{ x^2 + by^2 : x,y \in \mathbb{F}_p \}$. Though not the first to do so, Weber solved Problem~\ref{two elts; no coeffs} in the case $d = 2$:
\begin{proposition}[{\cite[p. 309]{weber}}]\label{weberarg} Every element in a finite field $\mathbb{F}_q$ is the sum of two squares.
\end{proposition}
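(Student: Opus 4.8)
The plan is to use a simple pigeonhole (counting) argument on the set of squares, after first disposing of the characteristic-two case. So I would begin by splitting into two cases according to the parity of $q$.

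First I would handle the case where $q$ is even, i.e., $\mathbb{F}_q$ has characteristic $2$. Here the Frobenius map $x \mapsto x^2$ is an injective (hence bijective) homomorphism of $\mathbb{F}_q$, so \emph{every} element of $\mathbb{F}_q$ is itself a square. In particular, any $b \in \mathbb{F}_q$ can be written as $b = c^2 + 0^2$ for a suitable $c$, and we are done trivially in this case.

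The heart of the argument is the case where $q$ is odd. Here I would count the squares. Since the squaring map is two-to-one on $\mathbb{F}_q^\times$ and fixes $0$, the set of squares $A = \{x^2 : x \in \mathbb{F}_q\}$ has exactly $(q-1)/2 + 1 = (q+1)/2$ elements. Fix any target $b \in \mathbb{F}_q$ and form the translated-and-reflected set $B = \{b - y^2 : y \in \mathbb{F}_q\}$, which is a bijective image of $A$ and therefore also has $(q+1)/2$ elements. Both $A$ and $B$ live inside $\mathbb{F}_q$, and
\[
|A| + |B| \ = \ \frac{q+1}{2} + \frac{q+1}{2} \ = \ q + 1 \ > \ q \ = \ |\mathbb{F}_q|.
\]
By the pigeonhole principle the sets $A$ and $B$ cannot be disjoint, so there exist $x, y \in \mathbb{F}_q$ with $x^2 = b - y^2$, i.e., $x^2 + y^2 = b$, as desired.

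I do not expect a serious obstacle here: the only subtlety is remembering to treat characteristic $2$ separately (where the counting breaks down because squaring is a bijection rather than two-to-one) and verifying the square count of $(q+1)/2$ in the odd case. It is worth flagging, in line with the first remark of the introduction, that this argument places \emph{no} lower bound on $q$ and requires no "sufficiently large" hypothesis, confirming that the case $d = 2$ is genuinely special.
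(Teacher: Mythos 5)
Your proof is correct, but it is not Weber's argument that the paper gives for this proposition---it is Cauchy's counting argument, which the paper presents separately (as the second proof of the stronger Proposition~\ref{prob 2 solution when d = 2}). The paper's own proof of Proposition~\ref{weberarg} is constructive: after dispatching characteristic $2$ via the Frobenius exactly as you do, it splits the odd case according to whether $-1$ is a square, exhibiting the explicit representation $c = \left(c+\frac14\right)^2 + \left(g\left(c-\frac14\right)\right)^2$ when $g^2 = -1$, and otherwise locating consecutive elements $a$, $a+1$ of the prime subfield with $a$ a square and $a+1$ a nonsquare, then using the fact that a product of two nonsquares is a square to write $c = (hg)^2 + h^2$. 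Your pigeonhole route is shorter and, as the paper notes, upgrades with no extra effort to $\{ax^2+by^2\}$ for arbitrary $a,b \in \mathbb{F}_q^\times$, since $\{ax^2 : x \in \mathbb{F}_q\}$ and $\{c - by^2 : y \in \mathbb{F}_q\}$ each still have $\frac{q+1}{2}$ elements; Weber's proof, by contrast, produces an explicit witness for each $c$ but requires the separate upgrade argument of Proposition~\ref{prob 2 solution when d = 2} to handle coefficients. Your counting details check out ($x \mapsto x^2$ is two-to-one on $\mathbb{F}_q^\times$ in odd characteristic, and $z \mapsto b - z$ is a bijection), and your closing observation that no lower bound on $q$ is needed matches the paper's first introductory remark; the one limitation worth knowing, which the paper flags with the example $\mathbb{F}_7$, is that your method is truly special to $d=2$: for $d=3$ the cubes can occupy less than half the field, so the pigeonhole step fails.
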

\begin{proof} Suppose $\mathbb{F}_q$ has characteristic 2, so that $q = 2^r$ for some positive integer $r$. Let $c \in \mathbb{F}_q$. Since $c^{q} =
 c$, it follows that $c = (c^{2^{r-1}})^2$, that is, $c$ is a square. Thus, the element $c = (c^{2^{r-1}})^2 + 0^2$ is a sum of two squares.

Now suppose $\mathbb{F}_q$ has characteristic $p > 2$, with $p$ prime. Let $c \in \mathbb{F}_q$. If $c$ is already a square, there is nothing to show, so suppose $c$ is a nonsquare. We now analyze two cases, depending on whether $-1$ is a square in $\mathbb{F}_q$. On the one hand, suppose $-1$ is a square in $\mathbb{F}_q$, so there is some element $g \in \mathbb{F}_q$ such that $g^2 = -1$. Then, the clever identity
\begin{equation*}
c\ = \ \left(c+\frac 14 \right)^2 + \left(g\left(c-\frac 14\right)\right)^2
\end{equation*}
expresses $c$ as a sum of two squares. Note that when $p = 3$, the symbol $\frac 14$ means $1$.

On the other hand, suppose $-1 = p-1$ is a nonsquare in $\mathbb{F}_q$ instead. We always know that $1$ is a square in $\mathbb{F}_q$. Now let's consider the prime subfield $\mathbb{F}_p \subseteq \mathbb{F}_q$, which is generated by $1$. Since $1$ is a square and $p-1$ is a nonsquare, it follows that there is a pair of numbers $a$ and $a+1$, both in $\mathbb{F}_p$, and both nonzero, such that $a$ is a square and $a+1$ is a nonsquare. Since $a$ is a square, there is a $g \in \mathbb{F}_q$ such that $g^2 = a$. Since $c$ is a nonsquare and so is $\frac{1}{a+1}$, it follows\footnote{A short argument establishes that the product of two nonsquares is a square. The map $\phi: \mathbb{F}_q^\times \to \mathbb{F}_q^\times$ given by $\phi(x)= x^2$ is a group homomorphism with kernel $\ker (\phi) = \{1,-1\}$, so $H:= \mathrm{im}(\phi)$ is an index 2 subgroup of $\mathbb{F}_q^\times$. In other words, $\mathbb{F}_q^\times / H$ is the group of two elements. As $c$ and $\frac{1}{a+1}$ are nonsquares, we have $cH = \frac{1}{a+1}H$, hence $\frac{c}{a+1}H = (cH)(\frac{1}{a+1}H) = (cH)^2 = H$, hence $\frac{c}{a+1} \in H$.} that $\frac{c}{a+1}$ is a square, so write $\frac{c}{a+1} = h^2$ for some $h \in \mathbb{F}_q$. We conclude that
\begin{equation*}
c = \frac{c}{a+1} (a + 1) = h^2 ( a + 1) = h^2 a + h^2 = (hg)^2 + h^2,
\end{equation*}
which shows that $c$ is a sum of two squares.
\end{proof}
Weber's proof amounts to cleverly producing, for each $c \in \mathbb{F}_q$, a solution to the diagonal equation $x^2 + y^2 = c$. It can be upgraded as follows to a solution to Problem~\ref{two elts; with coeffs} in the case $d=2$:
\begin{proposition} \label{prob 2 solution when d = 2} For any finite field $\mathbb{F}_q$, we have for every $a,b \in \mathbb{F}_q^\times$ that
\begin{equation*} \mathbb{F}_q \ = \ \{ ax^2 + by^2 : x,y\in \mathbb{F}_q\}.
\end{equation*}
\end{proposition}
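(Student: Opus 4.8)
The plan is to upgrade Weber's Proposition~\ref{weberarg} by absorbing square coefficients into the variables, reducing to the coefficient-free case wherever possible, and then treating the one genuinely new configuration by a counting argument. First I would dispose of characteristic $2$: there the Frobenius map $x \mapsto x^2$ is a bijection of $\mathbb{F}_q$, so $\{ax^2 : x \in \mathbb{F}_q\} = a\,\mathbb{F}_q = \mathbb{F}_q$ already, and we may simply take $y = 0$. So assume $q$ is odd, and recall from the footnote to Proposition~\ref{weberarg} that the nonzero squares form an index-$2$ subgroup $H \le \mathbb{F}_q^\times$, with the product (or quotient) of any two nonsquares lying in $H$.

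Next I would split on the square classes of $a$ and $b$. If $a = \alpha^2$ is a square, then $ax^2 = (\alpha x)^2$ ranges over all squares as $x$ does, and likewise for $b$; so when $a,b$ are both squares we get $\{ax^2 + by^2\} = \{u^2 + v^2 : u,v \in \mathbb{F}_q\} = \mathbb{F}_q$ straight from Proposition~\ref{weberarg}. When $a,b$ are both nonsquares, $a/b \in H$, say $a/b = t^2$, and then $ax^2 + by^2 = b\bigl((tx)^2 + y^2\bigr)$ ranges over $b\,\mathbb{F}_q = \mathbb{F}_q$, again by Proposition~\ref{weberarg}. The remaining, genuinely new case---where exactly one of $a,b$ is a nonsquare---is the main obstacle: after absorbing the square coefficient we are left to show that $\{u^2 + b y^2 : u,y \in \mathbb{F}_q\} = \mathbb{F}_q$ for a \emph{nonsquare} $b$, and here the nonsquare factor cannot be pulled inside a square, so Weber's identities no longer apply directly.

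To clear this obstacle I would abandon explicit identities in favor of a pigeonhole count, which in fact handles every case at once. Fix a target $c \in \mathbb{F}_q$ and set $A = \{ax^2 : x \in \mathbb{F}_q\}$ and $B = \{c - by^2 : y \in \mathbb{F}_q\}$. Since $x \mapsto x^2$ is two-to-one on $\mathbb{F}_q^\times$ and fixes $0$, the set of squares has cardinality $\tfrac{q-1}{2} + 1 = \tfrac{q+1}{2}$; multiplying by the nonzero constant $a$ is a bijection, so $|A| = \tfrac{q+1}{2}$, and the same reasoning (with the extra bijection $t \mapsto c - t$) gives $|B| = \tfrac{q+1}{2}$. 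As both sets lie in $\mathbb{F}_q$ and
\[
|A| + |B| \ = \ q + 1 \ > \ q \ = \ |\mathbb{F}_q|,
\]
they cannot be disjoint; any common value yields $x,y$ with $ax^2 = c - by^2$, that is, $ax^2 + by^2 = c$. I expect the only subtlety to be bookkeeping the cardinalities correctly---the ``$+1$'' coming from $0$ is exactly what makes the two counts just barely overlap---and I would remark that this argument is strong enough to reprove Proposition~\ref{weberarg} itself and to dispatch the mixed case with no appeal to the classification of binary quadratic forms.
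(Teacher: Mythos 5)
Your proof is correct, but it stitches together the paper's two proofs of this proposition rather than following either one. Your characteristic-$2$ reduction and your treatment of the case where $a$ and $b$ lie in the same square class match the paper's first proof: write $a/b = g^2$, absorb the factor, and invoke Proposition~\ref{weberarg}. Where you diverge is the mixed case, which you call ``the main obstacle'': in fact the paper's first proof disposes of it in two lines with no appeal to Proposition~\ref{weberarg} at all. If exactly one of $a,b$ is a square, say $a$, then for any square $c$ the quotient $c/a$ is a square, so $ax^2 = c$ is already solvable with $y = 0$; and for any nonsquare $c$ the quotient $c/b$ is a square (a nonsquare divided by a nonsquare), so $by^2 = c$ is solvable with $x = 0$. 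Your fallback for that case---the pigeonhole count on $A = \{ax^2 : x \in \mathbb{F}_q\}$ and $B = \{c - by^2 : y \in \mathbb{F}_q\}$, each of cardinality $\frac{q+1}{2}$---is essentially verbatim the paper's \emph{second} proof, attributed there to Cauchy, and as you yourself observe it handles every case at once; consequently the square-class analysis preceding it is logically redundant in your write-up. As for what each route buys: the identity-based argument stays explicit and constructive via Weber's formulas, while the counting argument is shorter, uniform, and non-constructive (and, as the paper notes, it is the one that fails to generalize to $d = 3$, where the cubes in $\mathbb{F}_7$ occupy less than half the field). Your hybrid is sound, but it gains nothing over running the count alone.
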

\begin{proof} There are two cases. First, suppose that exactly one of $a$ and $b$ is a square, say $a$. If $c$ is a square, then $\frac ca$ is a square, and hence $ax^2 = c$ has a solution in $\mathbb{F}_q$, which implies that $c \in \{ax^2 + by^2 : x,y \in \mathbb{F}_q\}$, and if $c$ is a nonsquare, then similarly $bx^2 = c$ has a solution in $\mathbb{F}_q$, which implies the same. Second, suppose $a$ and $b$ are both squares or both nonsquares, so that $\frac ab$ is a square. Let $g \in \mathbb{F}_q^\times$ satisfy $g^2 = \frac ab$. By Proposition~\ref{weberarg}, there exist $x_0, y_0 \in \mathbb{F}_q$ such that $\frac ca = x_0^2 + y_0^2$. Set $y_1 = y_0 g$. Then
\begin{equation*} c \ = \ ax_0^2 + ay_0^2 \ = \ ax_0^2 + a \left(\frac{y_1}{g} \right)^2 \ = \ ax_0^2 + by_1^2. \qedhere
\end{equation*}
\end{proof}
There is a more straightforward argument than Weber's. Long before Weber's textbook was published, Cauchy \cite{cauchy} had solved Problem~\ref{two elts; with coeffs} when $d = 2$:
\begin{proof}[Second Proof of Proposition \ref{prob 2 solution when d = 2}] If $\mathbb{F}_q$ has characteristic 2, then, as observed before, every element is already a square, which quickly finishes the argument. Otherwise, fix $a,b \in \mathbb{F}_q^\times$ and let $c \in \mathbb{F}_q$. Since there are $\frac{q+1}{2}$ squares in $\mathbb{F}_q$ and the cardinality of a subset of a finite field is invariant under multiplication or addition by a fixed nonzero element of the field, the sets $\{ ax^2 : x \in \mathbb{F}_q\}$ and $\{c-by^2 : y \in \mathbb{F}_q\}$ both have cardinality $\frac{q+1}{2}$, i.e., occupy slightly more than half of the field $\mathbb{F}_q$. Thus these two sets have at least one common element $z$, which satisfies $z = ax^2 = c - by^2$ for some $x,y \in \mathbb{F}_q$. Hence $c = ax^2 + by^2$.
\end{proof}
\begin{remark} Cauchy's argument is more well known than Weber's, in part because it has been rediscovered more frequently. See, e.g., \cite[Lemma 1]{d3}, where it appears without citation, perhaps because it was well known by then.
\end{remark}
Cauchy's argument cannot be generalized to solve Problem~\ref{two elts; with coeffs} in general, not even when $d = 3$. For example, in $\mathbb{F}_7 = \{0,1,2,3,4,5,6\}$, the set of cubes is $\{x^3 : x \in \mathbb{F}_7\} = \{0,1,6\}$, which has less than half the cardinality of the field. Thus the pigeonhole principle used in the proof does not apply, as we cannot hope to intersect two modified sets of cubes. While we are here, we also observe that $\{x^3 + y^3 : x,y \in \mathbb{F}_7\} = \{0,1,2,5,6\} \neq \mathbb{F}_7$, so the failure of the argument is of course caused by $\mathbb{F}_7$ itself and not by a lack of ingenuity in modifying the technique in some way.

It turns out that, except when $q = 4$ and $q = 7$, every element of $\mathbb{F}_q$ can be written as a sum of two cubes, which solves Problem~\ref{two elts; no coeffs} in the case $d=3$. This precise result, proved in an elementary way in the twentieth century, is due to Skolem\footnote{Unlike these other people, who did not help found finitism, Skolem was more keen on mathematical logic. Several of his results, including this one, were independently rediscovered due to the general disconnection in research networks that prevailed in the years before the internet.} \cite{skolem} when $q$ is prime and to Singh \cite{singh} in general. Skolem and many others addressed Problem~\ref{two elts; with coeffs} in the case $d = 3$; see \cite[pp. 325--326]{LN} for sources.

Diverting our attention from successes of elementary methods for the moment, we return to the nineteenth century. Initiating the study of cyclotomy,\footnote{See \cite{storer} for a treatment of cyclotomy.} a web of problems that all involve roots of unity, Gauss, according to Weil, ``obtain[ed] the numbers of solutions for all congruences $ax^3 - by^3 \equiv 1 (\text{mod } p)$" for primes $p$ with $p \equiv 1 \, (\text{mod } 3)$ and ``[drew] attention himself to the elegance of his method, as well as to its wide scope'' \cite{weil}. Weil adds that V. A. Lebesgue (and, by implication, Gauss as well) was unable to bring these vaunted methods to bear on the full generality of Problem~\ref{count solutions}. We remark that V. A. Lebesgue and others had modest successes on specific cases with the same flavor as the quoted result by Gauss---restricted to $q$ prime, and so on. It was Kummer who more securely connected cyclotomy to Gauss sums \cite{kummer1, kummer2, kummer3}, paving the way for further advancements in the theory of diagonal equations.

We enter the twentieth century again, this time wearing our cyclotomy goggles. From this viewpoint, Dickson was quite important. The following theorem of his, from two of dozens of his cyclotomy papers, illustrates another historical reason for interest in diagonal equations:
\begin{theorem}[\cite{d2,d1}]\label{dickson thm} Fix an odd prime $e > 2$. Then for all sufficiently large primes $p$, the equation
\begin{equation}\label{dickson eqn} x^e + y^e + z^e \ = \ 0 \, (\emph{mod } p) 
\end{equation}
has a solution $(x,y,z) \in (\mathbb{F}_p^\times)^3$.
\end{theorem}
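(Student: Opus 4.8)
The plan is to count the solutions with all coordinates nonzero directly using multiplicative characters, and to show this count is positive for large $p$ via the standard bound $|g(\chi)| = \sqrt p$ on Gauss sums (this being the ``nontrivial estimate'' the paper advertises). First I would dispose of the easy case. Since $e$ is prime, $\gcd(e,p-1) \in \{1,e\}$. If $\gcd(e,p-1)=1$, then $x \mapsto x^e$ permutes $\mathbb{F}_p$, so \eqref{dickson eqn} is equivalent to $u+v+w = 0$, which has the nonzero solution $(1,1,-2)$ once $p > 2$. Hence I may assume $p \equiv 1 \pmod e$, where each nonzero value of $x^e$ is attained exactly $e$ times.

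Next I would record, for $v \in \mathbb{F}_p^\times$, the character identity
\[ \#\{x \in \mathbb{F}_p^\times : x^e = v\} \ = \ \sum_{\chi^e = \chi_0} \chi(v), \]
the sum running over the $e$ multiplicative characters whose order divides $e$. Writing $N^\ast$ for the number of solutions of \eqref{dickson eqn} with $x,y,z \in \mathbb{F}_p^\times$ and grouping solutions by the values $u=x^e$, $v=y^e$, $w=z^e$, this gives
\[ N^\ast \ = \sum_{\chi_1^e=\chi_2^e=\chi_3^e=\chi_0} \ \sum_{\substack{u,v,w \in \mathbb{F}_p^\times \\ u+v+w=0}} \chi_1(u)\chi_2(v)\chi_3(w). \]
The $e^3$ inner sums are Jacobi-type sums over the plane $u+v+w=0$, and the whole argument reduces to understanding them.

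The key computation exploits the scaling action $(u,v,w) \mapsto (su,sv,sw)$: summing over the scalar $s \in \mathbb{F}_p^\times$ first shows that each inner sum vanishes unless $\chi_1\chi_2\chi_3 = \chi_0$, in which case it factors as $(p-1)$ times a classical Jacobi sum of the form $J(\chi_1,\chi_2) = \sum_{\alpha} \chi_1(\alpha)\chi_2(1-\alpha)$. The all-trivial term contributes the main term, which counts nonzero triples summing to zero and equals $(p-1)(p-2) \sim p^2$. Every other surviving term has $\chi_1,\chi_2,\chi_3$ all nontrivial with trivial product, and there the nontrivial estimate $|J(\chi_1,\chi_2)| = \sqrt p$ applies (equivalently $|g(\chi)| = \sqrt p$, via $J(\chi_1,\chi_2) = g(\chi_1)g(\chi_2)/g(\chi_1\chi_2)$). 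Since there are at most $(e-1)(e-2)$ such triples, I would arrive at
\[ \left| N^\ast - (p-1)(p-2) \right| \ \le \ (e-1)(e-2)\,(p-1)\sqrt p \, + \, O(ep). \]

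Because $e$ is fixed while the main term grows like $p^2$ and the error only like $p^{3/2}$, the right-hand side is eventually dominated, forcing $N^\ast > 0$ for all sufficiently large $p$, which is exactly the claim. The main obstacle is precisely the estimate $|J(\chi_1,\chi_2)| = \sqrt p$; this is where the genuine finite-field input sits, and everything else is bookkeeping around the scaling factorization. An appealing alternative, more geometric in flavor, is to interpret $N^\ast$ through the smooth projective Fermat curve $X^e + Y^e + Z^e = 0$ of genus $(e-1)(e-2)/2$: the Hasse--Weil bound shows its number of $\mathbb{F}_p$-points differs from $p+1$ by at most $(e-1)(e-2)\sqrt p$, and after discarding the at most $3e$ points lying on the coordinate lines and multiplying by the $p-1$ nonzero scalings one recovers the same estimate for $N^\ast$.
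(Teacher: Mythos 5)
First, a point of comparison that matters here: the paper does not prove this theorem at all. It is stated as historical background, quoted from Dickson's two 1909 papers \cite{d2,d1}, and the remark following it concerns only its bearing on Fermat's last theorem. So your argument can only be measured against the classical proofs, and on that score it is essentially correct and is the standard one: reduce to $p \equiv 1 \pmod{e}$ (using primality of $e$, with the bijective case handled by $(1,1,-2)$), expand $N^\ast$ via the counting identity $\#\{x : x^e = v\} = \sum_{\chi^e = \chi_0}\chi(v)$, kill the terms with $\chi_1\chi_2\chi_3 \neq \chi_0$ by the scaling action, identify the main term $(p-1)(p-2)$, and bound the surviving Jacobi sums by $\sqrt p$. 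This is close in spirit to Dickson's own cyclotomic method, and it is precisely the mechanism behind the estimate $|N(b)-q| \leq (\delta-1)^2\sqrt q$ that the paper quotes from Joly in proving Theorem~\ref{small thm}; your Hasse--Weil variant (legitimate for large $p$, since $p \nmid e$ makes the Fermat curve smooth of genus $(e-1)(e-2)/2$) is the modern geometric repackaging of the same bound. One small caveat on framing: the ``nontrivial estimate'' the paper itself deploys in Section 3 is the Weil bound on additive character sums $\sum_g \chi(P(g))$, not the Gauss--Jacobi sum modulus, though the two live in the same circle of ideas.

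There is one genuine slip in your classification of the surviving terms. The scaling argument forces only $\chi_1\chi_2\chi_3 = \chi_0$, and this is compatible with \emph{exactly one} character being trivial: for instance $\chi_3 = \chi_0$ and $\chi_2 = \overline{\chi_1}$ with $\chi_1 \neq \chi_0$. So your sentence ``every other surviving term has $\chi_1,\chi_2,\chi_3$ all nontrivial'' is false. These $3(e-1)$ degenerate triples do survive; substituting $u = rv$, $w = -(1+r)v$ and summing over $v \neq 0$, $r \neq 0,-1$ evaluates such a term as $-(p-1)\chi_1(-1)$, of magnitude $p-1$. Your stated error bound happens to contain an unexplained $O(ep)$ that absorbs exactly these contributions, so the final inequality
\begin{equation*}
N^\ast \ \geq \ (p-1)(p-2) - (e-1)(e-2)(p-1)\sqrt p - 3(e-1)(p-1) \ > \ 0 \quad \text{for large } p
\end{equation*}
and the conclusion both stand, but you should repair the bookkeeping: the $e^2$ triples with trivial product split into the principal one (main term), the $3(e-1)$ one-trivial triples (each of size $p-1$), and the $(e-1)(e-2)$ all-nontrivial triples (each of size $(p-1)\sqrt p$, since $\chi_1\chi_2 = \overline{\chi_3} \neq \chi_0$ makes $|J(\chi_1,\chi_2)| = \sqrt p$ applicable).
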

\begin{remark} This theorem's \emph{raison d'\^{e}tre} was that it nullified an approach to proving the case of Fermat's last theorem with prime exponents. If the conclusion of Theorem~\ref{dickson thm} were false for, say, $e = 3$, then there would exist a sequence of primes $(p_n)$ tending to infinity for which the only solutions $(x,y,z) \in \mathbb{F}_{p_n}^3$ to equation~\eqref{dickson eqn} with $e =3$ would have at least one of $x, y, z$ divisible by $p_n$. It follows that if we had a solution in integers to the equation $x^3 + y^3 + z^3 = 0$, taking this equation modulo a large enough $p_n$ would yield a contradiction. Perhaps surprisingly, this observation appears in \cite{hurwitz}, but not \cite{d2} or \cite{d1}.
\end{remark}
As stated, Theorem \ref{dickson thm} is a partial solution to Problem~\ref{count solutions} but not to Problem~\ref{two elts; with coeffs} or even Problem~\ref{two elts; no coeffs}. Hurwitz obtained an improvement which pertains to Problem~\ref{two elts; with coeffs}:
\begin{theorem}[\cite{hurwitz}]\label{hurwitz thm} Fix an odd prime $e > 2$ and nonzero integers $a,b,c$. Then for all sufficiently large primes $p$, the equation
\begin{equation}\label{hurwitz eqn} ax^e + by^e + cz^e \ = \ 0 \, (\emph{mod } p) 
\end{equation}
has a solution $(x,y,z) \in (\mathbb{F}_p^\times)^3$.
\end{theorem}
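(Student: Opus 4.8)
The plan is to count the solutions directly and show the count is positive once $p$ is large. Write $N$ for the number of triples $(x,y,z) \in (\mathbb{F}_p^\times)^3$ with $ax^e + by^e + cz^e = 0$ in $\mathbb{F}_p$; once $p > \max(|a|,|b|,|c|)$ the reductions of $a,b,c$ are nonzero, so this is meaningful. Using the standard additive character $\psi(t) = e^{2\pi i t/p}$ and orthogonality $\frac1p \sum_{t \in \mathbb{F}_p} \psi(ts) = \mathbf{1}_{s=0}$, I would write
\[
N = \frac1p \sum_{t \in \mathbb{F}_p}\ \sum_{x,y,z \in \mathbb{F}_p^\times} \psi\bigl(t(ax^e+by^e+cz^e)\bigr) = \frac1p \sum_{t \in \mathbb{F}_p} S(ta)\,S(tb)\,S(tc),
\]
where $S(s) := \sum_{x \in \mathbb{F}_p^\times}\psi(s x^e)$. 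The term $t=0$ contributes the main term $(p-1)^3/p$, of size $p^2$; everything else must be shown to be of smaller order.

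Next I would evaluate $S(s)$ for $s \neq 0$ via multiplicative characters. Since the number of $x \in \mathbb{F}_p^\times$ with $x^e = u$ equals $\sum_{\chi^e = \epsilon}\chi(u)$, a sum over the $d := \gcd(e,p-1)$ characters of order dividing $e$, substituting and isolating the trivial character $\epsilon$ gives
\[
S(s) = -1 + \sum_{\substack{\chi^e = \epsilon \\ \chi \neq \epsilon}} \overline{\chi}(s)\, g(\chi), \qquad g(\chi) := \sum_{u \in \mathbb{F}_p^\times}\chi(u)\psi(u).
\]
Because $e$ is prime, $d \in \{1,e\}$. When $p \not\equiv 1 \pmod e$ we have $d = 1$, the map $x \mapsto x^e$ is a bijection, and $N$ counts solutions of $au+bv+cw=0$ in $(\mathbb{F}_p^\times)^3$, which is easily positive; so I may assume $p \equiv 1 \pmod e$, yielding exactly $e-1$ nontrivial characters $\chi$, each of order $e$.

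Expanding $S(ta)S(tb)S(tc)$ and summing over $t \neq 0$, the key point is that $\sum_{t \in \mathbb{F}_p^\times} \overline{\chi_1\chi_2\chi_3}(t)$ vanishes unless $\chi_1\chi_2\chi_3 = \epsilon$. This kills most cross terms: the terms with exactly one nontrivial character vanish outright, the terms with two surviving characters contribute at most order $p$ (using $g(\chi)g(\overline\chi) = \chi(-1)p$), and the genuinely dangerous contribution is the triple-character term
\[
\frac{p-1}{p} \sum_{\substack{\chi_1,\chi_2,\chi_3 \neq \epsilon \\ \chi_1\chi_2\chi_3 = \epsilon}} \overline{\chi_1}(a)\,\overline{\chi_2}(b)\,\overline{\chi_3}(c)\, g(\chi_1)\,g(\chi_2)\,g(\chi_3).
\]
Here I would invoke the one nontrivial input from the theory of finite fields, the magnitude $|g(\chi)| = \sqrt p$ for each nontrivial $\chi$; since the number of admissible triples depends only on $e$, this whole term is $O_e(p^{3/2})$. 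Collecting everything yields $N = p^2 + O_e(p^{3/2})$, which is strictly positive for all sufficiently large $p$.

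The main obstacle, and the only place genuine arithmetic harmonic analysis enters, is controlling that triple product of Gauss sums: without the estimate $|g(\chi)| = \sqrt p$ the error term would swamp the main term. Equivalently, one may repackage each pair $g(\chi_i)g(\chi_j)$ as a Jacobi sum and quote Weil's evaluation of diagonal equations, but the Gauss-sum bound is the essential ingredient either way. The remaining work—verifying the vanishing of the lower cross terms and pinning down the exact size of the main term—is bookkeeping that I would carry out but not belabor.
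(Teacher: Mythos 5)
Your proof is correct, but note first that the paper never actually proves Theorem~\ref{hurwitz thm}: it is stated with a citation to Hurwitz \cite{hurwitz}, so there is no internal proof to match your attempt against, and the comparison has to be with the machinery the paper develops for its two-variable problems. Your computation checks out at every key point: orthogonality gives $N=\frac 1p\sum_t S(ta)S(tb)S(tc)$ with main term $(p-1)^3/p$; the expansion $S(s)=-1+\sum\overline{\chi}(s)g(\chi)$ over the nontrivial $\chi$ with $\chi^e=\epsilon$ is right; the reduction to $p\equiv 1\pmod e$ is legitimate precisely because $e$ is prime (and in the case $d=1$ the count $(p-1)(p-2)$ of solutions to $au+bv+cw=0$ in $(\mathbb{F}_p^\times)^3$ is indeed positive); the one-character cross terms vanish by $\sum_{t\neq 0}\overline{\chi}(t)=0$, the two-character terms are $O_e(p)$ via $g(\chi)g(\overline{\chi})=\chi(-1)p$, and the triple term involves at most $(e-1)^2$ admissible triples, giving $N=p^2+O_e(p^{3/2})$ as you claim. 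Relative to the paper, your route is the classical one: it is essentially what lies beneath Small's Theorem~\ref{small thm}, since Joly's inequality $|N(b)-q|\leq(\delta-1)^2\sqrt q$ \cite{joly,small} is your triple-Gauss-sum estimate in disguise, connected to the homogeneous three-variable equation by the same division trick the paper records in the remark following Theorem~\ref{hurwitz thm}; and it runs parallel to the paper's Fourier-analytic argument for Lemma~\ref{eqdver2}. One instructive difference is that you slightly overstate the depth of your main input. Because your polynomial is the monomial $sx^e$, the complete exponential sum resolves exactly into Gauss sums, and $|g(\chi)|=\sqrt p$ is an elementary identity ($|g(\chi)|^2=p$ follows from a short orthogonality computation), whereas the paper's Lemma~\ref{eqdver2} must handle arbitrary polynomials $P$ and therefore needs the genuinely deep Weil bound $|\sum_g\chi(P(g))|\leq(d-1)\sqrt q$. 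So your proof is in fact more elementary than the paper's Fourier approach while being sharper than its Section~\ref{sec 2} averaging argument: it produces the explicit threshold $p>(e-1)^4$ up to lower-order terms, consonant with the $(d-1)^4$ bound of Theorem~\ref{small thm}, which the soft Cauchy--Schwarz method cannot deliver.
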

\begin{remark} If $p$ is a prime such that $(x,y,z) \in (\mathbb{F}_p^\times)^3$ is a solution to equation~\eqref{hurwitz eqn}, then after subtracting $cz^e$ from both sides and dividing by $z^e$, we have
\begin{equation*}
a \left( \frac{x}{z} \right)^e + b \left( \frac{y}{z} \right)^e \ = \ -c,
\end{equation*}
which expresses an arbitrary nonzero element $-c$ of $\mathbb{F}_p$ as a member of the set $\{ ax^e + by^e : x,y \in \mathbb{F}_p \}$. Thus Theorem~\ref{hurwitz thm} solves Problem~\ref{two elts; with coeffs} when $q$ is prime and $d$ is an odd prime. We will make use of this division trick again.
\end{remark}

Having explained some partial solutions to Problem~\ref{count solutions}, and their connection to Problems~\ref{two elts; no coeffs}~and~\ref{two elts; with coeffs} in the case $q$ prime, it is reasonable to say that everything has been set in motion. In a few decades, some solutions to Problem~\ref{count solutions} have emerged. In this article we are not interested in the exact details of these solutions, but for completeness's sake, we mention that, broadly speaking, the approaches go either by Gauss sums, or by Jacobi sums, or by elementary methods.\footnote{Each of these methods, including a combined treatment of the elementary approaches by Stepanov \cite{stepanov} and Schmidt \cite{schmidt}, is treated in, e.g., \cite{LN}.}

Weil summarized historical progress on Problem~\ref{count solutions} up to 1949 in \cite{weil}, in the process simplifying and advancing some earlier work by Hasse and Davenport in \cite{dh}. It should be mentioned that the new results from \cite{weil} were discovered during the proofing process to be, in Weil's words, ``substantially identical'' to those of Hua and Vandiver in \cite{hv1}. What appears in \cite{weil} and \cite{hv1} has been made more elementary in several presentations, such as \cite{IR}, and can now be readily studied therefrom.

Small explicitly observed in the short article \cite{small} the fact that a certain estimate on the number of solutions to diagonal equations yields a solution to Problem~\ref{two elts; no coeffs}. For the sake of review, we now reproduce the one theorem from \cite{small}, with minor changes in notation for consistency: 
\begin{theorem}[\cite{small}]\label{small thm} Let $d$ be a positive integer, let $\mathbb{F}_q$ be a finite field, and put $\delta = \gcd (q-1,d)$. Assume $q > (\delta - 1)^4$. Then every element of $\mathbb{F}_q$ is a sum of two $d$th powers. (In particular, the conclusion holds if $q > (d-1)^4$, since $d \geq \delta$.)
\end{theorem}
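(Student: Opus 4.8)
The plan is to turn the existence question into a solution-count for a diagonal equation and then estimate that count by multiplicative characters, importing the Jacobi sum bound as the one nontrivial ingredient. First I would reduce to the case in which the exponent divides $q-1$. Since $\mathbb{F}_q^\times$ is cyclic of order $q-1$, the image of $x \mapsto x^d$ on $\mathbb{F}_q^\times$ is the unique subgroup of index $\gcd(q-1,d) = \delta$, which is precisely the image of $x \mapsto x^\delta$; as $0$ is both a $d$th power and a $\delta$th power, we get $\{x^d : x \in \mathbb{F}_q\} = \{x^\delta : x \in \mathbb{F}_q\}$. Hence it suffices to show that every $c \in \mathbb{F}_q$ is a sum of two $\delta$th powers. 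The element $c=0$ is immediate from $0 = 0^\delta + 0^\delta$, so I fix $c \in \mathbb{F}_q^\times$ and let $N(c)$ be the number of pairs $(x,y) \in \mathbb{F}_q^2$ with $x^\delta + y^\delta = c$; the goal is to prove $N(c) > 0$.

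Next I would pass to characters. Because $\delta \mid q-1$, there is a multiplicative character $\chi$ of exact order $\delta$, and for $u \neq 0$ the number of $x$ with $x^\delta = u$ equals $\sum_{j=0}^{\delta-1} \chi^j(u)$ (with $u=0$ contributing a single solution). Writing $N(c)$ as a sum over $u+v=c$ of the product of these solution counts and expanding, I obtain
\[
N(c) \ = \ \sum_{i=0}^{\delta-1}\sum_{j=0}^{\delta-1} \ \sum_{u+v=c} \chi^i(u)\chi^j(v).
\]
Here the term $(i,j)=(0,0)$ gives the main term $q$, while any term with exactly one of $i,j$ equal to $0$ collapses to a complete sum of a nontrivial character over $\mathbb{F}_q$ and therefore vanishes by orthogonality. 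For the remaining terms, with $1 \le i,j \le \delta-1$, the substitution $u = ct$ (legitimate since $c \neq 0$) factors out $\chi^{i+j}(c)$ and leaves a Jacobi sum, yielding
\[
N(c) \ = \ q + \sum_{i=1}^{\delta-1}\sum_{j=1}^{\delta-1} \chi^{i+j}(c)\, J(\chi^i,\chi^j), \qquad J(\chi^i,\chi^j) = \sum_{t \in \mathbb{F}_q} \chi^i(t)\,\chi^j(1-t).
\]

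It then remains to estimate the $(\delta-1)^2$ error terms, and this is where the one substantial input enters: the bound $|J(\chi^i,\chi^j)| \le \sqrt{q}$, coming from $|J(\chi^i,\chi^j)| = \sqrt{q}$ when $\chi^{i+j}$ is nontrivial and $|J(\chi^i,\chi^j)| = 1 \le \sqrt q$ when $\chi^{i+j}$ is trivial. This is the ``nontrivial estimate from the theory of finite fields,'' equivalent to the fact that nontrivial Gauss sums have absolute value $\sqrt{q}$, and I expect verifying or citing it to be the crux of the argument, everything else being bookkeeping. Granting it, I would conclude
\[
N(c) \ \geq \ q - (\delta-1)^2 \sqrt{q} \ = \ \sqrt{q}\left(\sqrt{q} - (\delta-1)^2\right),
\]
which is strictly positive exactly when $q > (\delta-1)^4$, the stated hypothesis. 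Thus every $c \in \mathbb{F}_q^\times$ is a sum of two $\delta$th powers, hence of two $d$th powers, completing the proof; the parenthetical remark follows since $\delta \le d$ forces $(\delta-1)^4 \le (d-1)^4$.
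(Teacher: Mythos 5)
Your proof is correct, and its skeleton is the same as the paper's: reduce to $b \neq 0$, establish $|N(b)-q| \leq (\delta-1)^2\sqrt{q}$, and conclude $N(b) \geq \sqrt{q}\left(\sqrt{q}-(\delta-1)^2\right) > 0$ exactly when $q > (\delta-1)^4$. Where you genuinely diverge is in the provenance of the key estimate: the paper treats it as a black box, citing \cite[Corollary 1, p. 57]{joly} and doing nothing but the two-line deduction of positivity, whereas you derive the estimate from scratch---expanding the solution count in the multiplicative characters of order dividing $\delta$, killing the mixed terms by orthogonality, and reducing everything to the Jacobi sum modulus $|J(\chi^i,\chi^j)| \leq \sqrt{q}$. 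This is in fact precisely the content behind the cited corollary (the paper remarks that it ``follows from the development of Gauss and Jacobi sums over a few chapters in Joly''; the same derivation is in \cite{IR}), so you have unpacked the citation rather than found a new road; likewise your preliminary reduction from $d$th to $\delta$th powers via the cyclic structure of $\mathbb{F}_q^\times$ parallels the paper's Lemma~\ref{bezoutarg}, which obtains the same identity by B\'ezout instead. One point you glossed: the clean expansion $N(c) = \sum_{i,j}\sum_{u+v=c}\chi^i(u)\chi^j(v)$ is exact only under the standard convention that the trivial character takes the value $1$ at $0$ while nontrivial characters take the value $0$ there; with that convention your counting formula is valid at $u=0$ as well, so the bookkeeping closes, but it deserves a sentence. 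As for what each approach buys: the paper's version isolates the counting estimate as the sole nontrivial ingredient and keeps the proof two lines long, while yours is self-contained modulo $|J| \leq \sqrt{q}$ (equivalently, the Gauss sum modulus) and makes visible that the factor $(\delta-1)^2$ is simply the number of pairs of nonprincipal characters surviving the orthogonality step.
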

\begin{proof} For $b \in \mathbb{F}_q$ let $N(b)$ denote the number of solutions $(x,y) \in \mathbb{F}_q \times \mathbb{F}_q$ of $x^d + y^d = b$. Then, by definition, $N(b) \geq 0$; we have to show that $q > (\delta - 1)^4$ implies $N(b) > 0$. We may assume $b \neq 0$, since 0 is certainly a sum of two $d$th powers. Then, by \cite[Corollary 1, p. 57]{joly}, we have $|N(b)-q| \leq (\delta-1)^2\sqrt q$. In particular, $N(b) - q \geq -(\delta-1)^2\sqrt q$, so that $N(b) \geq \sqrt q ( \sqrt q - (\delta - 1)^2)$. Hence $N(b) > 0$, for all $b$, provided $\sqrt q > (\delta-1)^2$, or in other words $q > (\delta - 1)^4$.
\end{proof}
\begin{remark}
If $d > 1$ is a fixed integer, one might wonder what is the largest integer $q$ for which not every element of $\mathbb{F}_q$ is the sum of two $d$th powers. By Theorem~\ref{small thm}, this value must be less than $(d-1)^4+1$. This problem might be interesting to pursue, in part because it is related to Waring's problem over finite fields.
\end{remark}

Small cites Jean-Ren\'e Joly's self-contained survey \cite{joly} about equations and algebraic varieties over finite fields. In the chapter endnotes, Joly states that the particular result Small would later quote is due independently to Davenport and Hasse in a 1934 paper \cite{dh} and to Hua and Vandiver in 1949 papers \cite{hv1} and \cite{hv2}. But one of the Hua--Vandiver papers \cite{hv2} cites \cite{dh} regarding the result in question!\footnote{It is possible that Hua and Vandiver learned about the Davenport--Hasse result after the publication of \cite{hv1} in 1949 and before the publication of \cite{hv2}\ldots in 1949. What a year 1949 was!} Anyway, as for the result Small uses, it follows from the development of Gauss and Jacobi sums over a few chapters in Joly.\footnote{The treatments in Joly \cite{joly} and Ireland--Rosen \cite{IR} are roughly equally accessible. After reading this paragraph and the one about Weil, the reader hopefully has some feeling of a frenzy, whether synchronous or asynchronous, around this topic.}

\begin{remark} We remark that although Small recorded a solution to Problem~\ref{two elts; no coeffs}, the inequality he quoted is general enough that he had essentially recorded a solution to Problem~\ref{two elts; with coeffs} as well.
\end{remark}
In the remaining two sections, we share two other ways to attack Problem~\ref{two elts; with coeffs}. The first is both new and elementary in the sense that it does not depend on Gauss sums, Jacobi sums, or hard counting of solutions to diagonal equations. As we will see, it is a soft averaging argument---what we are averaging will become apparent---and its main tool is the Cauchy--Schwarz inequality. The second way, which is not new, could accidentally be considered elementary if the reader blinks at the right moment. To be only a little more precise, we will reframe the situation using Fourier analysis, reducing the problem to a state where a single bolt of lightning from the Riemann hypothesis over finite fields creates a piece of fulgurite\footnote{Fulgurite is a general term for a mineral-like clump of dirt that can form where lightning strikes the ground.} to add to one's collection. We include this second approach as a byway to reiterate the continual importance of Fourier analysis and to emphasize a certain application from the high-minded theory of the Riemann hypothesis over finite fields, which we do not intend to explain here. We hope this will prove a useful juxtaposition with our elementary first proof and the arguments in this introduction.

\section{An elementary approach.}\label{sec 2}
From this section onward, we let $\mu_q$ denote the counting measure on $\mathbb{F}_q$, normalized to be a probability measure; thus, for any subset $A$ of $\mathbb{F}_q$, $\mu_q(A)$ equals $|A|/q$, the cardinality of $A$ divided by $q$. In context, $q$ will be fixed, so we will suppress the subscript and just write $\mu$. Besides measuring sets, we will also shift them: If $y$ lives in $\mathbb{F}_q$, denote by $A + y$ the set
\begin{equation*}
A + y \ := \ \{ x + y \in \mathbb{F}_q : x \in A\}.
\end{equation*}
We need a basic lemma.
\begin{lemma}\label{bezoutarg} Fix positive integers $d$ and $q$, and let $\delta = \gcd (d,q-1)$. Then
\begin{equation*} \{x^{\delta} : x \in \mathbb{F}_{q} \} \ = \ \{x^d : x \in \mathbb{F}_{q}\}.
\end{equation*}
\end{lemma}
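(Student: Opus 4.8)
The plan is to prove the two inclusions separately, and to notice that one of them is essentially free while the other is where the arithmetic of the exponents actually enters. Throughout, I would keep two facts in view: that $\delta = \gcd(d, q-1)$ divides $d$, and that every nonzero element of $\mathbb{F}_q$ satisfies $x^{q-1} = 1$, since $\mathbb{F}_q^\times$ is a group of order $q-1$.

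First I would dispatch the inclusion $\{x^d : x \in \mathbb{F}_q\} \subseteq \{x^\delta : x \in \mathbb{F}_q\}$, which requires nothing beyond $\delta \mid d$. Writing $d = \delta m$ for a positive integer $m$, any $d$th power factors as $x^d = (x^m)^\delta$ and so is visibly a $\delta$th power. This direction uses no field structure whatsoever.

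The reverse inclusion $\{x^\delta : x \in \mathbb{F}_q\} \subseteq \{x^d : x \in \mathbb{F}_q\}$ is the substantive step, and the reason the lemma is flagged as a Bézout argument. Here I would invoke Bézout's identity to pick integers $u, v$ with $\delta = ud + v(q-1)$. For a fixed $x \in \mathbb{F}_q^\times$, invertibility lets me raise $x$ to any integer power, so
\begin{equation*}
x^\delta \ = \ x^{ud + v(q-1)} \ = \ (x^u)^d \cdot (x^{q-1})^v \ = \ (x^u)^d,
\end{equation*}
where $x^{q-1} = 1$ annihilates the second factor. Thus every $\delta$th power of a nonzero element is the $d$th power of $x^u$. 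The element $0$ is handled trivially, as $0^\delta = 0 = 0^d$. Combining the two inclusions yields the claimed equality.

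The only point needing care — rather than a genuine obstacle — is that typically one of $u, v$ in the Bézout relation is negative, so the step $x^{ud} = (x^u)^d$ must be read inside the group $\mathbb{F}_q^\times$, where negative exponents are legitimate. This is precisely why I would run the argument on nonzero elements first and treat $0$ by hand.
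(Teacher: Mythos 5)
Your proof is correct and follows essentially the same route as the paper's: the easy inclusion via $x^d = (x^{d/\delta})^\delta$, and the substantive one via a B\'ezout relation $\delta = ud + v(q-1)$ with $x^{q-1}=1$ killing the second factor on $\mathbb{F}_q^\times$. Your explicit handling of $0$ and the remark about negative exponents living in the group $\mathbb{F}_q^\times$ are small points of care that the paper leaves implicit, but the argument is the same.
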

\begin{proof} By B\'ezout's lemma, there exist integers $r$ and $s$ such that $rd + s(q-1) = \delta$. Hence, for all $x \in \mathbb{F}_{q}^\times$, we have
\begin{equation}\label{bezout manip} x^{\delta} \ = \ (x^r)^d(x^s)^{q-1} \ = \ (x^r)^d,
\end{equation}
since $y^{q-1} = 1$ for all $y \in \mathbb{F}_{q}^\times$. But equation \eqref{bezout manip} implies the set containment
\begin{equation*} \{x^{\delta} : x \in \mathbb{F}_{q} \} \ \subseteq \ \{x^d : x \in \mathbb{F}_{q}\}.
\end{equation*}
The reverse set containment holds since $x^d = (x^{d/\delta})^{\delta}$ for all $x \in \mathbb{F}_{q}$.
\end{proof}
Now, as promised in the introduction, we provide here an elementary proof of the following:
\begin{theorem}\label{elem thm} Fix an integer $d > 1$. Then, for every sufficiently large finite field $\mathbb{F}_q$ with characteristic larger than $d$ and every $a,b \in \mathbb{F}_q^\times$, we have
\begin{equation*} \mathbb{F}_q \ = \ \{ax^d + by^d : x,y \in \mathbb{F}_q\}.
\end{equation*}
\end{theorem}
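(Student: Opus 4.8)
The plan is to reduce the problem to a near‑minimality estimate for the additive energy of the $d$th powers, and to squeeze the \emph{full} field out of that estimate by exploiting a multiplicative symmetry that an energy bound alone would miss. First I would apply Lemma~\ref{bezoutarg} to replace $d$ by $\delta=\gcd(d,q-1)$, so that the set of $d$th powers is $D=H\cup\{0\}$, where $H$ is the index‑$\delta$ subgroup of $\mathbb{F}_q^\times$ consisting of the nonzero $d$th powers, with $|H|=(q-1)/\delta$. The crucial soft observation is that the target set $S:=\{ax^d+by^d:x,y\in\mathbb{F}_q\}=aD+bD$ is invariant under multiplication by $H$: for $h\in H$ one has $hD=D$, hence $hS=a(hD)+b(hD)=S$. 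Since also $0\in S$ trivially, the complement $\mathbb{F}_q\setminus S$ is $H$‑invariant and avoids $0$, so it is a union of cosets of $H$. As there are only $\delta$ such cosets, each of size $(q-1)/\delta$, it suffices to show that \emph{fewer than} $(q-1)/\delta$ elements fail to lie in $S$; equivalently, that $S$ meets every one of the $\delta$ cosets.

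Next I would count with the representation function $R(c)=\#\{(x,y):ax^d+by^d=c\}$, for which $\sum_c R(c)=q^2$. The same dilation $(x,y)\mapsto(x/g,y/g)$ with $g^d=h$ shows $R(hc)=R(c)$, so $R$ is constant on each coset of $H$; write $\rho_1,\dots,\rho_\delta\ge 0$ for its values. If some coset were missed, say $\rho_{j_0}=0$, then Cauchy--Schwarz applied to the remaining $\delta-1$ values gives $\bigl(\sum_j\rho_j\bigr)^2\le(\delta-1)\sum_j\rho_j^2$; multiplying by $|H|$ and using $\sum_c R(c)=q^2$ (and $R(0)=O(q)$) forces $\sum_c R(c)^2\gtrsim \tfrac{\delta}{\delta-1}\,q^3$. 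Contrapositively, it is enough to prove the energy bound $\sum_c R(c)^2<\tfrac{\delta}{\delta-1}q^3$. Cauchy--Schwarz also dispatches the coefficients: with $m(t)=\#\{(x,y):x^d-y^d=t\}$ one has $\sum_c R(c)^2=\sum_t m(t)\,m(\tfrac{a}{b}t)\le\sum_t m(t)^2=:E$, since $t\mapsto\tfrac{a}{b}t$ permutes $\mathbb{F}_q$. Thus the coefficients disappear, and it remains to bound the pure additive energy $E=\#\{x_1^d-x_2^d=x_3^d-x_4^d\}$ of the $d$th powers, uniformly in $a$ and $b$.

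It remains to show $E<\tfrac{\delta}{\delta-1}q^3$, and this is the heart of the matter. Because $\sum_t m(t)=q^2$, the identity $E-q^3=\sum_t\bigl(m(t)-q\bigr)^2$ recasts the target as the assertion that the point counts $m(t)$ of the curves $x^d-y^d=t$ cluster around their average value $q$ in an $L^2$ sense, with excess smaller than $q^3/(\delta-1)$. I expect this near‑minimality of the energy to be the main obstacle: it is exactly the place where one classically invokes a square‑root cancellation (Weil‑type) estimate, and obtaining a usable version of it by soft means is the whole game. The symmetry already in hand narrows the task, since $m$, like $R$, is constant on the $\delta$ cosets of $H$; writing $\mu_1,\dots,\mu_\delta$ for its coset values, the bound we need becomes $\sum_j(\mu_j-q)^2\lesssim\tfrac{\delta}{\delta-1}q^2$, i.e. the differences of elements of $H$ must be spread nearly evenly across the cosets rather than concentrating. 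Here the hypotheses earn their keep: with $d$ fixed and the characteristic exceeding $d$ (so that $\delta$ is prime to the characteristic and the maps $x\mapsto x^d$ are separable of bounded degree), the relevant configurations are governed by bounded‑degree data, and I would try to control the coset values $\mu_j$ by a soft averaging of this structure, which for $q$ large should pin them down to within the constant factor $\tfrac{\delta}{\delta-1}$ that the coset‑covering argument can afford.
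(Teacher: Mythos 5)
Your reductions are correct and genuinely different in shape from the paper's argument---the $H$-invariance of $S=aD+bD$, the constancy of the representation function $R$ on cosets of $H$, the Cauchy--Schwarz step showing that a missed coset forces $\sum_c R(c)^2 \gtrsim \frac{\delta}{\delta-1}q^3$, and the elimination of the coefficients $a,b$ via $\sum_t m(t)m(\frac ab t)\le E$ are all sound. But the proof has a genuine gap exactly where you flag it: the energy bound $E < \frac{\delta}{\delta-1}q^3$, equivalently $\sum_t (m(t)-q)^2 = o(q^3)$, is never proved. This is not a loose end one can expect ``soft averaging of bounded-degree data'' to tie up: asserting that $m(t)$ clusters around $q$ in $L^2$ across the $\delta$ cosets is a square-root-cancellation statement essentially equivalent in strength to the Davenport--Hasse/Weil bound $|N(b)-q|\le(\delta-1)^2\sqrt q$ invoked in Theorem~\ref{small thm}---precisely the nontrivial input an elementary proof must either supply or circumvent. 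As written, your last paragraph is a restatement of the problem, not an argument, so the attempt is a (nice) reduction rather than a proof.

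The paper circumvents this issue rather than supplying it: its elementary proof never needs per-coset or $L^2$ control of $m(t)$. It needs only the much weaker averaged statement of Lemma~\ref{eqd}, that $\frac1q\sum_g \mu(A\cap(B+cg^d))$ is close to $\mu(A)\mu(B)$, which it proves by iterating Weyl-type differencing of the polynomial $P(g)=cg^d$ together with Cauchy--Schwarz ($d-1$ rounds, reducing to a linear polynomial, whose shifts average the function $1_A(x+g)-\mu(A)$ to zero); combined with Lemma~\ref{dth power map has large image} ($\mu(A)>\frac1d$) and the division-by-$g$ trick, positivity of a single intersection with $g\ne 0$ finishes the theorem. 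It is worth noting that your coset-constancy observation would actually let you complete your plan by importing the paper's lemma: since $\mu(D\cap(D+h))$ is constant on each coset $cH$, applying Lemma~\ref{eqd} with $A=B=D$ and $P(g)=cg^d$ pins down that constant to $\mu(D)^2+O(\mathcal{E}(q,d))$ for every coset representative $c$, which yields your required bound on $\sum_j(\mu_j-q)^2$. So the framework is salvageable, but the missing estimate is the entire content of the proof, and it is exactly what the paper's differencing argument exists to provide.
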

\begin{remark} If we restrict to the special case where $q$ is prime, we can remove the assumption that $\mathrm{char}(\mathbb{F}_q) > d$. Indeed, let $q = p$ be prime, let $d > 1$, and set $\delta = \gcd(d,p-1)$. By Lemma~\ref{bezoutarg}, it follows that
\begin{equation*} \{ax^d + by^d : x,y \in \mathbb{F}_{p}\} \ = \ \{ax^{\delta} + by^{\delta} : x,y \in \mathbb{F}_{p}\}
\end{equation*}
for all $a, b \in \mathbb{F}_p^\times$. The result follows from the theorem since $\delta < p = \mathrm{char}(\mathbb{F}_{p})$. 
\end{remark}
 To prove Theorem~\ref{elem thm}, we will first need to state two lemmas. We prove the easier one immediately and defer the proof of the other one to the end of the section.
\begin{lemma}\label{dth power map has large image} Fix a finite field $\mathbb{F}_q$ and an integer $d > 1$, and let $A = \{ x^d : x \in \mathbb{F}_q \}$. Then $\mu(A) > \frac 1d$.
\end{lemma}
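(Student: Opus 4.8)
The plan is to compute $|A|$ exactly and then compare it against $q/d$. First I would apply Lemma~\ref{bezoutarg} to replace the $d$th powers with $\delta$th powers, where $\delta = \gcd(d, q-1)$; the advantage is that $\delta \mid q-1$, so the $\delta$th power map meshes cleanly with the cyclic group structure of $\mathbb{F}_q^\times$.

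The heart of the matter is a short count of the nonzero $\delta$th powers. The map $\psi \colon \mathbb{F}_q^\times \to \mathbb{F}_q^\times$ defined by $\psi(x) = x^\delta$ is a homomorphism of the cyclic group $\mathbb{F}_q^\times$ of order $q-1$. Its kernel is the set of $\delta$th roots of unity, and since $\delta \mid q-1$, a cyclic group of order $q-1$ contains exactly $\delta$ of them; thus $|\ker \psi| = \delta$ and, by the first isomorphism theorem, $|\operatorname{im} \psi| = (q-1)/\delta$. Adjoining the element $0 = 0^\delta$, I obtain $|A| = (q-1)/\delta + 1$, whence $\mu(A) = \big((q-1)/\delta + 1\big)/q$.

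It remains to deduce the stated inequality. Since $\delta \le d$, we have $(q-1)/\delta \ge (q-1)/d$, and therefore $\mu(A) \ge \big((q-1)/d + 1\big)/q$. A one-line rearrangement shows that this lower bound strictly exceeds $1/d$ precisely when $d > 1$, which is our standing hypothesis.

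The main obstacle here is a subtlety rather than a genuine difficulty: pinning down the \emph{strict} inequality. In the extremal case $\delta = d$, the nonzero $d$th powers by themselves occupy only measure $(q-1)/(dq) < 1/d$, so the margin that carries $\mu(A)$ above $1/d$ is supplied entirely by the lone element $0$, working together with the hypothesis $d > 1$. The count must accordingly be careful to include $0$, and the strictness of $d > 1$ is used in an essential way.
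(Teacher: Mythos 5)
Your proof is correct and takes essentially the same approach as the paper's: a homomorphism count of the nonzero $d$th powers via the first isomorphism theorem, with the adjoined element $0$ and the strict hypothesis $d > 1$ together supplying the strict inequality $\mu(A) > \frac 1d$. The only difference is that you detour through Lemma~\ref{bezoutarg} to make the kernel count exact ($|\ker \psi| = \delta$), whereas the paper works directly with $x \mapsto x^d$ and merely bounds its kernel by $d$ using the fact that $x^d = 1$ has at most $d$ roots; your extra precision is harmless but not needed for the stated bound.
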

\begin{proof} The map $\phi : \mathbb{F}_q^\times \to \mathbb{F}_q^\times$ given by $\phi(x) = x^d$ is a group homomorphism. Since the equation $x^d = 1$ has at most $d$ distinct solutions in $\mathbb{F}_q$, it follows that $|\ker(\phi) |\leq d$. Thus
\begin{equation*} |A| - 1 \ = \ |\mathrm{im} (\phi )| \ = \ \frac{|\mathbb{F}_q^\times|}{|\ker (\phi ) |} \ = \ \frac{q - 1}{|\ker (\phi ) |} \ \geq \ \frac{q-1}{d},
\end{equation*}
which implies that
\begin{equation*} |A| \ \geq \  1 + \frac{q-1}{d} \ > \ \frac 1d + \frac{q-1}{d} \ = \ \frac qd.
\end{equation*}
Thus $\mu(A) = \frac{|A|}{q} > \frac 1d$. 
\end{proof}
\begin{lemma} \label{eqd} There is a positive function $\mathcal{E}(q,d) : \mathbb{N}^2 \to \mathbb{R}$ with these properties:
\begin{enumerate}
\item For any $q$, any subsets $A_q, B_q \subseteq \mathbb{F}_q$, and any polynomial $P \in \mathbb{F}_q[x]$ with degree $d$ satisfying $1 < d < \mathrm{char}(\mathbb{F}_q)$, we have
\begin{equation}\label{eqdbound} \left\vert \frac 1q \sum_{g \in \mathbb{F}_q} \mu(A_q \cap (B_q + P(g))) - \mu(A_q)\mu(B_q) \right| \ \leq \ \mathcal{E}(q,d).
\end{equation}
\item For any fixed $d$, we have $\lim_{q\to\infty} \mathcal{E}(q,d) \ = \ 0$. 
\end{enumerate}
\end{lemma}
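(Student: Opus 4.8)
The plan is to peel off the main term by passing to balanced functions and then spend a single Cauchy--Schwarz. Writing $f_0 = \mathbf{1}_{A_q} - \mu(A_q)$ and $h_0 = \mathbf{1}_{B_q} - \mu(B_q)$, expanding $\mu(A_q \cap (B_q + t)) = \cavg{x}\mathbf{1}_{A_q}(x)\mathbf{1}_{B_q}(x-t)$ and using that $f_0,h_0$ have mean zero kills the two cross terms, leaving
\[ \mu(A_q \cap (B_q + t)) - \mu(A_q)\mu(B_q) \ = \ \cavg{x} f_0(x)\, h_0(x-t). \]
Averaging this identity over $t = P(g)$ shows that the quantity inside the absolute value in~\eqref{eqdbound} is exactly $\cavg{x} f_0(x) H(x)$, where $H(x) = \tfrac1q\sum_{g\in\mathbb{F}_q} h_0(x - P(g))$. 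Since $\cavg{x} f_0(x)^2 = \mu(A_q)(1-\mu(A_q)) \le \tfrac14$, one application of Cauchy--Schwarz reduces the whole lemma to showing $\cavg{x} H(x)^2 \to 0$ as $q\to\infty$, uniformly in $A_q,B_q$. This is the ``soft averaging'': only a second-moment estimate remains.

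Next I would open up this second moment and let the polynomial take over. Expanding the square and collapsing the inner sum gives
\[ \cavg{x} H(x)^2 \ = \ \frac{1}{q^3} \sum_{g,g' \in \mathbb{F}_q} R\bigl(P(g) - P(g')\bigr), \qquad R(m) = \sum_{u\in\mathbb{F}_q} h_0(u) h_0(u+m), \]
where the autocorrelation $R$ satisfies $\sum_m R(m) = \bigl(\sum_u h_0(u)\bigr)^2 = 0$. Thus the problem is transferred entirely onto the difference statistics of $P$: with $M(m) = \#\{(g,g') : P(g)-P(g') = m\}$, the mean-zero relation lets me write $\cavg{x}H(x)^2 = \tfrac{1}{q^3}\sum_m R(m)\bigl(M(m)-q\bigr)$, and everything hinges on the differences $P(g)-P(g')$ being close to equidistributed, i.e.\ $M(m)\approx q$.

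The elementary fuel is the collision bound: any $Q \in \mathbb{F}_q[x]$ of degree $e$ takes each value at most $e$ times, so $\sum_c \#\{g : Q(g)=c\}^2 \le e\,q$. The case $d = 2$ closes immediately and exactly. Setting $g' = g+h$, the $h=0$ terms contribute $R(0)/q^2 \le \tfrac{1}{4q}$, and for each $h \ne 0$ the polynomial $P(g)-P(g+h)$ is \emph{nonconstant and linear} (its leading coefficient is a nonzero multiple of $dh$, using $d < \mathrm{char}(\mathbb{F}_q)$), so as $g$ runs over $\mathbb{F}_q$ its values run over all of $\mathbb{F}_q$ and $\sum_g R(P(g)-P(g+h)) = \sum_m R(m) = 0$; hence $\cavg{x}H(x)^2 \le \tfrac{1}{4q}$. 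For $d \ge 3$ the same differencing replaces $P$ by $P(g+h)-P(g)$, of degree $d-1$ with nonzero leading coefficient precisely because $1 < d < \mathrm{char}(\mathbb{F}_q)$; I would then iterate Cauchy--Schwarz in the successive difference parameters (a van der Corput / Weyl-differencing scheme) to lower the degree one step at a time until the inner polynomial is again nonconstant and linear, where the contribution is annihilated by $\sum_m R(m)=0$. Tracking the losses produces a power saving, and I would simply define $\mathcal{E}(q,d)$ to be the resulting bound, of the shape $C_d\, q^{-c_d}$ with $c_d>0$; property~(1) then holds by construction and property~(2) because $\mathcal{E}(q,d)\to 0$ for each fixed $d$.

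The hard part is exactly the passage from $d=2$ to $d \ge 3$: extracting \emph{genuine} cancellation rather than a trivial $O(1)$ bound. The collision estimate controls only the diagonal and permits an individual $M(m)$ to differ from its mean $q$ by as much as $\Theta(q)$, so bounding $\sum_m R(m)(M(m)-q)$ with the triangle inequality loses a full power of $q$ and yields only a constant; indeed the variance $\sum_m(M(m)-q)^2$ can itself be of order $q^3$ for structured $A_q,B_q$ (e.g.\ additive cosets), so the saving must come from how the deviations are \emph{aligned} against $R$, not from their size. This is the same equidistribution of $P(g)-P(g')$ that the later Fourier-analytic proof extracts from a character-sum bound; here it must instead be wrung out of the iterated differencing, which is why the scheme has to keep every differenced polynomial at full degree --- the sole role of the hypothesis $1<d<\mathrm{char}(\mathbb{F}_q)$ --- since the clean one-step vanishing available at $d=2$ disappears once the first difference is quadratic or higher. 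Each individual step is still only Cauchy--Schwarz, so the argument stays elementary in the promised sense.
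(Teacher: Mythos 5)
Your proposal is correct and is essentially the paper's own argument: after passing to balanced functions and spending one Cauchy--Schwarz, you run the identical van der Corput/Weyl differencing iteration, reducing the degree one step at a time until the differenced polynomial is linear with invertible leading coefficient (the sole use of $1 < d < \mathrm{char}(\mathbb{F}_q)$, exactly as in the paper), so that its permutation property annihilates the average via $\sum_m R(m) = 0$, while the degenerate tuples with some $h_i = 0$ contribute the error term --- your predicted bound of shape $C_d\,q^{-c_d}$ matches the paper's $\mathcal{E}(q,d) = \left( \frac{q^{d-1}-(q-1)^{d-1}}{q^{d-1}} \right)^{2^{-(d-1)}}$. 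Your autocorrelation bookkeeping with $R(m)$ and $M(m)$ is only a cosmetic mirror image of the paper's inner products $\langle a_{P(g)}, a_0 \rangle$ (you balance $B$ and pair against a balanced $A$; the paper balances $A$ and pairs against $1_B$), so the two proofs coincide in substance.
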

\begin{remarks} \begin{enumerate}
\item In the first statement, we suppose $d > 1$ because Problems~\ref{two elts; no coeffs}~and~\ref{two elts; with coeffs} are trivially solved without this lemma when $d = 1$. In fact, for polynomials $P$ of degree 1, the left-hand side of \eqref{eqdbound} is always 0. On the other hand, it is not possible to remove the assumption that $d < \mathrm{char}(\mathbb{F}_q)$. This is because for certain choices of $P$, for example $P(x) = x^{\mathrm{char}(\mathbb{F}_q)} + x$, the supremum over $A_q$ and $B_q$ of the left-hand side of \eqref{eqdbound} is bounded away from 0, independently of $q$.
\item This lemma asserts that, eventually, the quantities $\mu(A_q \cap (B_q + P(g)))$ are of size $\mu(A_q)\mu(B_q)$ on average. We will see in the proof of Theorem~\ref{elem thm} that, when concrete choices of $A_q$ and $B_q$ are made, this currently vague statement will suddenly reveal useful information, namely the nontrivial intersection of $A_q$ and $B_q + P(g)$ for some $g \neq 0$. 
\item Our choice of $\mathcal{E}(q,d)$ will not be optimal in any reasonable sense.
\end{enumerate}
\end{remarks}

\begin{proof}[Proof of Theorem~\ref{elem thm}] We need only to show that $\mathbb{F}_q \ \subseteq \ \{ax^d + by^d : x,y \in \mathbb{F}_q\}$, as the reverse inclusion is trivial.
Let $A_q=\{ax^d:x\in\mathbb{F}_q\}$ and $B_q = \{-by^d : y \in \mathbb{F}_q \}$. Fix $c \in \mathbb{F}_q^\times$, and let $P(x) = cx^d$. Let $\mathcal{E}(q,d)$ be the function from Lemma~\ref{eqd}.
\begin{claim} For sufficiently large $q$, we have
\begin{equation*} \mu(A_q)\mu(B_q) - \frac 1q \mu(A_q \cap B_q) \ > \ \mathcal{E}(q,d).
\end{equation*}
\end{claim}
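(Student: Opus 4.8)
The plan is to estimate the two terms on the left-hand side separately, showing that the product $\mu(A_q)\mu(B_q)$ is bounded below by a positive constant depending only on $d$, while both the subtracted term $\frac 1q \mu(A_q \cap B_q)$ and the right-hand side $\mathcal{E}(q,d)$ vanish as $q \to \infty$. First I would pin down the two measures. Because multiplication by the nonzero scalar $a$ is a bijection of $\mathbb{F}_q$, it carries the set of $d$th powers $\{x^d : x \in \mathbb{F}_q\}$ onto $A_q$, so $\mu(A_q) = \mu(\{x^d : x \in \mathbb{F}_q\})$; likewise multiplication by $-b$ gives $\mu(B_q) = \mu(\{y^d : y \in \mathbb{F}_q\})$. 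Applying Lemma~\ref{dth power map has large image} to each factor yields $\mu(A_q) > \frac 1d$ and $\mu(B_q) > \frac 1d$, and hence
\begin{equation*} \mu(A_q)\mu(B_q) \ > \ \frac{1}{d^2}.
\end{equation*}
The key point is that this bound is independent of $q$ and of the choices of $a$, $b$, and $c$.

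Next I would dispose of the remaining term trivially: since $\mu(A_q \cap B_q) \leq 1$, we have $0 \leq \frac 1q \mu(A_q \cap B_q) \leq \frac 1q$. Combining this with the previous display gives
\begin{equation*} \mu(A_q)\mu(B_q) - \frac 1q \mu(A_q \cap B_q) \ > \ \frac{1}{d^2} - \frac 1q.
\end{equation*}
Since $d$ is fixed, $\frac{1}{d^2}$ is a positive constant, whereas $\frac 1q \to 0$ and, by part (2) of Lemma~\ref{eqd}, $\mathcal{E}(q,d) \to 0$ as $q \to \infty$. Thus there is a threshold $Q_0$ depending only on $d$ such that $\frac 1q + \mathcal{E}(q,d) < \frac{1}{d^2}$ for every $q > Q_0$, which delivers the claimed inequality.

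The genuine content is entirely front-loaded into Lemma~\ref{dth power map has large image}; everything after it is a soft limiting argument. The only subtlety worth flagging is not an obstacle so much as a bookkeeping point: one should check that the threshold $Q_0$ can be taken uniformly in $a$, $b$, and $c$. This is automatic here, because the lower bound $\frac{1}{d^2}$ does not degrade with $a$, $b$, or $c$ and the correction terms $\frac 1q$ and $\mathcal{E}(q,d)$ depend only on $q$ and $d$. Consequently a single threshold handles every $c \in \mathbb{F}_q^\times$ at once, which is exactly what the subsequent deduction of Theorem~\ref{elem thm} from this claim will require.
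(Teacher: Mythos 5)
Your proof is correct and follows essentially the same route as the paper's: Lemma~\ref{dth power map has large image} gives $\mu(A_q)\mu(B_q) > \frac{1}{d^2}$, while $\frac{1}{q}\mu(A_q\cap B_q) \leq \frac{1}{q}$ and $\mathcal{E}(q,d)$ both tend to $0$, so the inequality holds for large $q$. Your explicit justification that the scaling bijections $x \mapsto ax$ and $y \mapsto -by$ let the lemma apply to $A_q$ and $B_q$ (a step the paper leaves implicit), and your remark on uniformity in $a$, $b$, $c$, are welcome bits of care but do not change the argument.
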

Indeed, by Lemma~\ref{dth power map has large image}, $\mu(A_q) > \frac 1d$ and $\mu(B_q) > \frac 1d$. Since $\lim_{q\to\infty} \mathcal{E}(q,d) = 0$ and $\frac{1}{q} \mu(A_q \cap B_q) \leq \frac 1q$, it follows that for sufficiently large $q$ we have
\begin{equation*}\label{figure it out} \mathcal{E}(q,d) + \frac 1q \mu(A_q \cap B_q) \ < \ \frac{1}{d^2} \ < \ \mu(A_q)\mu(B_q),
\end{equation*}
whence the claim.

Now, define the closely related quantities
\begin{align*}
S \ & = \ \frac 1q \sum_{g \in \mathbb{F}_q} \mu(A_q \cap (B_q + P(g))) \text{ and } \\
T \ & = \ \frac 1q \sum_{g \in \mathbb{F}_q^\times} \mu(A_q \cap (B_q + P(g)))
\end{align*}
We note that $S = T + \frac{1}{q} \mu(A_q \cap B_q)$, which follows by adding the missing $g = 0$ term to $T$.

For sufficiently large $q$, Lemma~\ref{eqd} implies that
\begin{equation} \label{above the floor} S - \mu(A_q)\mu(B_q) \ \geq \ - \mathcal{E}(q,d).
\end{equation}
Thus, for sufficiently large $q$, it follows by the claim and \eqref{above the floor} that
\begin{align*} T \ & = \ S - \frac 1q \mu(A_q \cap B_q) \\
& = \ \bigg( S - \mu(A_q)\mu(B_q)\bigg) + \left( \mu(A_q)\mu(B_q)-\frac 1q\mu(A_q\cap B_q) \right) \\
& > \ - \mathcal{E}(q,d) + \mathcal{E}(q,d) \ = \ 0.
\end{align*}
Since $T > 0$ and $T$ is a sum of nonnegative numbers, at least one summand is positive. Thus, there is an element $g \in \mathbb{F}_q^\times$ such that $\mu(A_q \cap(B_q + P(g))) > 0$. Having positive measure, the set $A_q \cap (B_q + P(g))$ must therefore be nonempty; hence there exist $x_1,x_2 \in \mathbb{F}_q$ such that $ax_1^d = -bx_2^d + cg^d$. Since $g \neq 0$, we can rearrange\footnote{We have encountered this trick before. In the introduction, it appeared in the remark that explains how Theorem~\ref{hurwitz thm} solves Problem~\ref{two elts; with coeffs} when $q$ is prime.} this equation to yield
\begin{equation*} c \ = \ a \left( \frac{x_1}{g} \right)^d + b \left( \frac{x_2}{g}\right)^d,
\end{equation*}
which shows that $c \in \{ ax^d+by^d : x,y \in \mathbb{F}_q \}$. But $c \in \mathbb{F}_q^\times$ was arbitrary, and of course $0 = a \cdot 0^d + b \cdot 0^d$; hence we are done.
\end{proof}
To prove the lemma that has done all the heavy lifting for us, we need some elementary facts about inner products. Fix a finite field $\mathbb{F}_q$. Define the inner product
\begin{equation*} \lr{f_1,f_2} \ := \ \int_{\mathbb{F}_q} f_1\overline{f_2} \ d\mu \ = \ \frac 1q \sum_{g \in \mathbb{F}_q} f_1(g)\overline{f_2(g)}
\end{equation*}
for functions $f_1,f_2 : \mathbb{F}_q \to \mathbb{C}$ and write the corresponding norm $|| f_1 || := \sqrt{\lr{f_1,f_1}}$. In this notation, the Cauchy--Schwarz inequality has a simple form:
\begin{equation} \label{cs} \left| \lr{f_1,f_2}\right| \leq ||f_1||\cdot ||f_2||.
\end{equation}
Indeed, for $a_1, \ldots, a_q, b_1,\ldots, b_q \in \mathbb{C}$, the Cauchy--Schwarz inequality states that 
\begin{equation}\label{csorig} \left\vert \sum_{i=1}^q a_i \overline{b_i} \right\vert^2 \ \leq \ \sum_{j=1}^q |a_j|^2 \sum_{k=1}^q|b_k|^2. \end{equation}
Setting\footnote{We are effectively enumerating the elements of $\mathbb{F}_q$ as $g_1, \ldots, g_q$ and writing $f_1(j)$ instead of $f_1(g_j)$. This abuse of notation is committed only to clarify how one form of Cauchy--Schwarz follows from another.} $a_j = f_1(j)$ and $b_k = f_2(k)$ and dividing both sides by $q^2$, we obtain
\[ \left\vert \frac{1}{q} \sum_{i=1}^q f_1(i)\overline{f_2(i)} \right\vert^2 \ \leq \ \left( \frac{1}{q} \sum_{j=1}^{q} |f_1(j)|^2\right) \left( \frac{1}{q} \sum_{k=1}^{q} |f_2(k)|^2\right),\]
which is written compactly as $|\lr{f_1,f_2}|^2 \ \leq \ \lr{f_1,f_1}\lr{f_2,f_2}$, whence \eqref{cs}.

All the functions we will take inner products with will be real-valued, so we will treat this inner product as a pleasantly linear tool. In particular, we will pull finite sums in and out, without warning.
\begin{proof}[Proof of Lemma \ref{eqd}]
Fix $q$ and subsets $A,B \subseteq \mathbb{F}_q$. In the proof of Theorem~\ref{elem thm} we only used polynomials of the form $P(x) = cx^d$ for $c \in \mathbb{F}_q$ nonzero, so we will stick with these for illustration.

If $1_A$ is the characteristic function of $A$, let $a_g$ be the function $\mathbb{F}_q \to \mathbb C$ defined by $x \mapsto 1_A(x+g) - \mu(A)$. We make two observations about these $a_g$'s, both involving shifts, and both justifying the utility of this weird function. First, after a change of variable $x \mapsto x - g'$, we see
\begin{align} 
\lr{a_g,a_{g'}} \ & = \ \int_{\mathbb{F}_q} \Big( 1_A(x+g) - \mu(A)\Big) \Big(1_A(x+g') - \mu(A) \Big) \ d\mu(x) \nonumber \\
& = \ \int_{\mathbb{F}_q}  \Big( 1_A(x+g-g') - \mu(A)\Big) \Big(1_A(x) - \mu(A) \Big) \ d\mu(x) \nonumber \\
& = \ \lr{a_{g-g'},a_0}. \label{first ag fact}
\end{align}
Second, one checks\footnote{Equation \eqref{dumb} holds since $x+y \in A$ if and only if $x \in A - y$, and \eqref{dumb2} since $1_A(x)1_B(x) = 1_{A \cap B}(x)$.} that
\begin{align}
\lr{a_{P(g)},1_B} \ & = \ \int_{\mathbb{F}_q} \Big( 1_A(x+P(g)) - \mu(A) \Big)1_B(x) \ d\mu(x) \nonumber \\
& = \ \int_{\mathbb{F}_q} \Big( 1_{A-P(g)}(x)-\mu(A) \Big)1_B(x) \ d\mu(x) \label{dumb} \\
& = \ \int_{\mathbb{F}_q} 1_{A-P(g)}(x)1_B(x) - \mu(A)1_B(x) \ d\mu(x) \nonumber\\
& = \ \int_{\mathbb{F}_q} 1_{(A-P(g)) \cap B}(x) - \mu(A)1_B(x) \ d\mu(x) \label{dumb2} \\
& = \ \mu((A-P(g))\cap B) - \mu (A)\mu (B), \nonumber
\end{align}
so that
\begin{equation}\label{second fact}
\lr{a_{P(g)},1_B} = \ \mu(A \cap (B+P(g))) - \mu (A)\mu (B).
\end{equation}

With the help of \eqref{second fact}, we begin. By Cauchy--Schwarz, we have
\begin{align}
\left\vert\cavg{g} \mu (A \cap (B+P(g))) - \mu(A)\mu(B) \right\vert \ & = \ \left\vert \lr{\cavg{g} a_{P(g)}, 1_{B}} \right\vert \nonumber     \\
& \leq \ \left\vert\left\vert \cavg{g} a_{P(g)} \right\vert\right\vert \cdot ||1_{B}|| \nonumber \\
& \leq \ \left\vert\left\vert \cavg{g} a_{P(g)} \right\vert\right\vert \label{test 42},
\end{align}
where we used the fact that $||1_B|| = \sqrt{\lr{1_B,1_B}} = \sqrt{\mu(B)} \leq 1$.

We prepare to bound the norm in \eqref{test 42} using the following elementary observation. Namely, if $h \in \mathbb{F}_q$ is fixed, the polynomial
\begin{equation*} P(x+h)-P(x) \ = \ c(x+h)^d - cx^d
\end{equation*}
has degree at most $d-1$ in $x$. Usually the degree will be $d-1$, but it can be smaller if $h = 0$ or if we did not suppose the characteristic of $\mathbb{F}_q$ is larger than $d$. This \emph{differencing} procedure allows us to reduce the degree of a polynomial whenever we can introduce this difference, which we will do aggressively.\footnote{Arguably the most notable use of this idea is in \cite{weyl}, wherein Weyl shows that if $Q(x)$ is a real polynomial with at least one irrational coefficient, then $\{ Q(n) : n \in \mathbb{N} \}$ is uniformly distributed modulo 1.}
If we are given parameters $h_1, h_2, \ldots, h_{d-1} \in \mathbb{F}_q$, let
\begin{align*} 
 P(x;h_1) \ & := \ P(x+h_1)-P(x), \\
 P(x;h_1,h_2) \ & := \ P(x+h_2;h_1)-P(x;h_1), \\
 & \; \vdots\\
 P(x;h_1,h_2, \ldots, h_{d-1}) & := \ P(x+h_{d-1}; h_1,h_2,\ldots, h_{d-2}) - P(x; h_1,h_2,\ldots, h_{d-2})
\end{align*}
be, respectively, the degree at most $d-1$, degree at most $d-2$, \ldots, and degree at most $1$ polynomials in $x$ obtained by consecutively differencing $P$ by $h_1$, then the result by $h_2$, and so on down to a (usually) linear polynomial. Now we return to bound the norm in \eqref{test 42}. This is a key step.

Observe, after using the first fact \eqref{first ag fact} about the $a_g$'s, reindexing a sum to introduce differencing, and applying the triangle and Cauchy--Schwarz inequalities,
\begin{align}
\left\vert\left\vert \cavg{g} a_{P(g)} \right\vert\right\vert^2 \ & = \ \lr{\cavg{g'} a_{P(g')},\cavg{g} a_{P(g)}} &&\text{by definition} \nonumber \\
& = \ \cavg{g'} \cavg{g} \lr{a_{P(g')},a_{P(g)}} \nonumber \\
& = \ \cavg{g'} \cavg{g} \lr{a_{P(g')-P(g)},a_0} && \text{by \eqref{first ag fact}} \nonumber \\
& = \ \cavg{h_1} \cavg{g} \lr{a_{P(g+h_1)-P(g)},a_0} \nonumber \\
& = \ \cavg{h_1} \lr{\cavg{g} a_{P(g;h_1)},a_0} \nonumber \\
& \leq \ \cavg{h_1} \left\vert\left\vert \cavg{g} a_{P(g;h_1)} \right\vert\right\vert. \label{vdc reduce}
\end{align}
Note that the last step follows\footnote{Note that $||a_0|| = \sqrt{\mu(A)-\mu(A)^2} \leq \frac 12$ since the function $x \mapsto x-x^2$ has maximum $\frac 14$ on $[0,1]$, but this better bound is not necessary for our argument.} since
\begin{equation*}
||a_0|| \ = \ \left\vert\left\vert 1_{A}-\mu(A)\right\vert\right\vert  \ = \ \sqrt{\mu(A) -\mu(A)^2}  \ \leq \ 1.
\end{equation*}
This argument in \eqref{vdc reduce} has reduced the degree of $P(g)$ by (at least) one. Encouraged, we prepare to iterate. For each $h_1$, making the displayed argument with $a_{P(g;h_1)}$ replacing $a_{P(g)}$ on the left-hand side, we get a bound on $\left\vert\left\vert\cavg{g} a_{P(g;h_1)}\right\vert\right\vert$ in terms of a new parameter $h_2$, which plays the same role as $h_1$ in the displayed argument. This yields
\begin{equation} \label{vdc reduce 2} \left\vert\left\vert \cavg{g} a_{P(g;h_1)} \right\vert\right\vert \ \leq \ \sqrt{\cavg{h_2} \left\vert\left\vert \cavg{g} a_{P(g;h_1,h_2)} \right\vert\right\vert}.
\end{equation}
Applying \eqref{vdc reduce 2} to the norm in \eqref{vdc reduce}, we find
\[ \left\vert\left\vert \cavg{g} a_{P(g)} \right\vert\right\vert^2 \ \leq \ \cavg{h_1} \sqrt{\cavg{h_2} \left\vert\left\vert \cavg{g} a_{P(g;h_1,h_2)} \right\vert\right\vert},\]
where $P(g;h_1,h_2) = P(g+h_2;h_1)-P(g;h_1)$ is a polynomial in $g$ of degree at most $d-2$ for each $h_1, h_2 \in \mathbb{F}_q$. Proceeding recursively in this way, we see that
\begin{equation}\label{nested roots} \left\vert\left\vert \cavg{g} a_{P(g)} \right\vert\right\vert^2 \ \leq \ \cavg{h_1} \sqrt{\cavg{h_2} \sqrt{ \cdots \sqrt{\cavg{h_{d-2}} \left\vert\left\vert \cavg{g} a_{P(g;h_1,h_2,\ldots,h_{d-2})} \right\vert\right\vert} } }, \end{equation}
where $P(g;h_1,h_2,\ldots,h_{d-2})$ is the polynomial in $g$ of degree at most 2 obtained by differencing. If we reduce the degree one more time---but without applying Cauchy--Schwarz---we will obtain an expression that we can finally bound. Indeed, by repeating the argument of \eqref{vdc reduce}, stopping just before the inequality, observe that for any $h_1,h_2,\ldots,h_{d-2} \in \mathbb{F}_q$, we have
\begin{equation} \label{innermost state} 
\left\vert\left\vert \cavg{g} a_{P(g;h_1,\ldots,h_{d-2})} \right\vert\right\vert^2 \ = \ \cavg{h_{d-1}} \lr{\cavg{g} a_{P(g;h_1,h_2,\ldots,h_{d-1})},a_0}.
\end{equation}
By applying \eqref{innermost state} to the innermost part of \eqref{nested roots}, we have
\begin{equation}\label{final stage} \left\vert\left\vert \cavg{g} a_{P(g)} \right\vert\right\vert^2 \ \leq \ \cavg{h_1} \sqrt{ \cdots \sqrt{\cavg{h_{d-1}} \lr{\cavg{g} a_{P(g;h_1,h_2,\ldots,h_{d-1})},a_0}} }. \end{equation}
After taking square roots on both sides of \eqref{final stage}, we have an expression with $d-1$ radical symbols in it, which is annoying to look at and, fortunately, possible to adjust. By a different form of Cauchy--Schwarz,\footnote{Namely, if $c_1, \ldots, c_q$ are positive numbers, then $\frac{1}{q}\sum_{i=1}^{q} \sqrt{c_i} \leq \sqrt{\frac{1}{q}\sum_{i=1}^q c_i}$. To see this, set $a_j = \sqrt{c_j}$ and $b_k = 1$ in \eqref{csorig}, divide both sides by $q^2$, then take the square root of both sides. For the interested reader, this inequality is first applied with ``$i = h_{d-2}$'' and $c_i = \cavg{h_{d-1}} \lr{\cavg{g} a_{P(g;h_1,h_2,\ldots,h_{d-1})},a_0}$, where we again only abuse notation when clarifying something related to Cauchy--Schwarz.} we can move each of the square roots to the outside of the expression. This massaging of radical symbols is not necessary for the argument, but we hope the reader will appreciate the service. The result is that
\begin{equation} \label{finalest stage} \left\vert\left\vert \cavg{g} a_{P(g)} \right\vert\right\vert \ \leq \ \left( \frac{1}{q^{d-1}} \sum_{h_1,\ldots,h_{d-1}\in \mathbb{F}_q} \lr{\cavg{g} a_{P(g;h_1,h_2,\ldots,h_{d-1})},a_0} \right)^{2^{-(d-1)}}.
\end{equation}
Compare \eqref{finalest stage} and \eqref{test 42}. To finish the proof of the lemma, the idea is that the inner product in \eqref{finalest stage} is zero most of the time, i.e., for enough choices of the parameters $h_i$, and if it's not zero, then it can be bounded trivially.

First, the popular case. Tracing what happens when one differences $P(g) = cg^d$ the whole $d-1$ times, one sees that the linear coefficient of $P(g;h_1,\ldots,h_{d-1})$, viewed as a linear polynomial in $g$, is $d!c\prod_{i=1}^{d-1} h_i$. Since $d < \mathrm{char}(\mathbb{F}_q)$ and $c \neq 0$, this coefficient is zero if and only if at least one $h_i = 0$. Thus, for fixed nonzero $h_1,h_2,\ldots,h_{d-1} \in \mathbb{F}_q^\times$, this linear coefficient is invertible, which implies that the map $g \mapsto P(g;h_1,\ldots,h_{d-1})$ is a permutation of $\mathbb{F}_q$. Using this permutation to reindex a sum, we see that
\begin{equation*}
\lr{\cavg{g} a_{P(g;h_1,h_2,\ldots,h_{d-1})},a_0} \ = \ \lr{\cavg{g} a_g,a_0},
\end{equation*}
and a straightforward calculation shows that the function
\begin{equation*}
\cavg g a_g(x) \ = \ \cavg g 1_A(x + g) -\mu(A)
\end{equation*}
is the zero function, so that the inner product of interest is 0.
Now, for the unpopular case, if any $h_i$ is 0, then $P(g;h_1,\ldots,h_{d-1})$ is a constant polynomial in $g$, and after analyzing the differencing process, one can see that, in particular, the constant is 0. It follows that
\begin{equation*}
\lr{\cavg{g} a_{P(g;h_1,h_2,\ldots,h_{d-1})},a_0} \ = \ \lr{a_{0},a_0} \ = \ ||a_0||^2 \ \leq \ 1.
\end{equation*}
The final step is to reckon the exact popularity of the two cases. Of the $q^{d-1}$ choices of parameters $(h_1,\ldots, h_{d-1})$, exactly $(q-1)^{d-1}$ of them have no $h_i$ equal to zero. Thus, the unpopular case, comprising everything else, happens $q^{d-1}-(q-1)^{d-1}$ times in the sum in \eqref{finalest stage}. It follows that
\begin{equation*}
 \left\vert\left\vert \cavg{g} a_{P(g)} \right\vert\right\vert \ \leq \ \left( \frac{q^{d-1}-(q-1)^{d-1}}{q^{d-1}} \right)^{2^{-(d-1)}}.
\end{equation*}
Thus, take $\mathcal{E}(q,d) = \left( \frac{q^{d-1}-(q-1)^{d-1}}{q^{d-1}} \right)^{2^{-(d-1)}}$. For fixed $d$, $\lim_{q\to\infty} \mathcal{E}(q,d) = 0$.
\end{proof}
Recalling the remark below Theorem~\ref{elem thm}, we observe that Theorem~\ref{elem thm} solves Problem~\ref{two elts; with coeffs} except for those nonprime finite fields with characteristic drawn from the set of primes less than or equal to $d$. For context, this solves more of Problem~\ref{two elts; with coeffs} than Hurwitz's Theorem~\ref{hurwitz thm} does, and less of Problem~\ref{two elts; with coeffs} than Theorem~\ref{small thm} does. We judge this to be a success of elementary methods. Indeed, a method as soft as this cannot be expected to compete directly with careful estimates of numbers of solutions to diagonal equations. On the other hand, the full details of this method fit in these pages easily, and demonstrate the staying power of Cauchy--Schwarz, of differencing, and of averaging. 

\section{A Fourier-analytic approach.}
The first approach was animated by an all-encompassing desire to reduce the degree of a polynomial until it is linear, because a linear polynomial behaves predictably with respect to the averages we have considered---compare the popular and unpopular cases above. The basic structure of the second approach is similar overall: an auxiliary lemma will do most of the work with the help of the same averaging argument. However, in contrast to the erstwhile insistence on differencing, the second approach tolerates polynomiality until it becomes a lightning rod. We will find under this electric influence that the second approach solves Problem~\ref{two elts; with coeffs} in full generality.

\subsection{Preliminaries.}
We need just a little Fourier analysis.

An \emph{additive character} of $\mathbb{F}_q$ is a homomorphism $\chi : (\mathbb{F}_q,+) \to \mathbb{C}^\times$. If $\mathbb{F}_q$ has characteristic $p$, then evidently $\chi(g)^p = \chi(pg) = \chi(0) = 1$ for any $g \in \mathbb{F}_q$, so $\chi$ actually takes values in the set of $p$th roots of unity. As a result, we conclude that $\chi(-g) = \chi(g)^{-1} = \overline{\chi(g)}$. Of course, $\chi$ is an additive character if and only if $\overline{\chi}$ is. The \emph{principal character} $\chi_0$ is identically $1$. The set $\widehat{\mathbb{F}_q}$ of additive characters of $\mathbb{F}_q$ is an orthonormal basis for the $q$-dimensional $\mathbb{C}$-linear space of functions $f : \mathbb{F}_q \to \mathbb{C}$ with the inner product $\lr{\cdot,\cdot}$ defined in the previous section. Hence, with a small amount of work on orthogonality of characters, one can show that any $f$ can be written as a linear combination of additive characters in the following way:
\begin{equation}\label{basisdecomp}
f \ = \ \sum_{\chi \in \widehat{\mathbb{F}_q}} \lr{f,\chi}\chi.
\end{equation} The \emph{Fourier transform} of $f$, written $\hat{f}$, is the function $\hat{f} : \widehat{\mathbb{F}_q} \to \mathbb{C}$ given by $\hat{f}(\chi) = q\lr{f,\overline{\chi}} = \sum_{g \in \mathbb{F}_q} f(g)\chi(g)$. Thus $\lr{f,\chi} = \frac{\hat{f}(\overline{\chi})}{q}$; plugging this into~\eqref{basisdecomp} and reindexing the sum yields the Fourier inversion formula, valid for all $a \in \mathbb{F}_q$:
\begin{equation}\label{inversion formula} f(a) \ = \ \sum_{\chi \in \widehat{\mathbb{F}_q}} \frac{\hat{f}(\overline{\chi})}{q} \chi(a) \ = \ \frac{1}{q} \sum_{\chi \in \widehat{\mathbb{F}_q}} \hat{f}(\chi) \chi(-a).
\end{equation}
With some more work, one can show the Plancherel formula: For any $f_1, f_2 : \mathbb{F}_q \to \mathbb{C}$, we have $\lr{\hat{f_1},\hat{f_2}} = q \lr{f_1,f_2}$, where on the left-hand side we mean the analogous inner product
\begin{equation*} \lr{\hat{f_1},\hat{f_2}} := \frac 1q \sum_{\chi \in \widehat{\mathbb{F}_q}} \hat{f_1}(\chi) \overline{\hat{f_2}(\chi)}.
\end{equation*}
Fourier analysis is a huge topic; for a sample, the interested reader may consult \cite{hr, korner, ss}. 
\subsection{The content.} 
We will state a variant of Lemma~\ref{eqd}, use it to solve Problem~\ref{two elts; with coeffs}, and then prove it.
\begin{lemma}\label{eqdver2} There is a positive function $\mathcal{E}(q,d) : \mathbb{N}^2 \to \mathbb{R}$ with these properties:
\begin{enumerate}
\item For any $q$, any subsets $A_q, B_q \subseteq \mathbb{F}_q$, and any polynomial $P \in \mathbb{F}_q[x]$ with degree $d$ coprime to $q$, we have
\begin{equation*} \left\vert \frac 1q \sum_{g \in \mathbb{F}_q} \mu(A_q \cap (B_q + P(g))) - \mu(A_q)\mu(B_q) \right| \ \leq \ \mathcal{E}(q,d).
\end{equation*}
\item For any fixed $d$, we have $\lim_{q\to\infty} \mathcal{E}(q,d) \ = \ 0$. 
\end{enumerate}
\end{lemma}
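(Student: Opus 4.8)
The plan is to keep the averaging skeleton but to trade the iterated differencing for a single trip through the Fourier side, where $P$ survives as a character sum that is then annihilated by the Weil bound. Fix $q$, subsets $A, B \subseteq \mathbb{F}_q$, and $P \in \mathbb{F}_q[x]$ of degree $d$ coprime to $q$. First I would write $\mu(A \cap (B + P(g))) = \frac 1q \sum_{x \in \mathbb{F}_q} 1_A(x) 1_B(x - P(g))$ and expand each indicator with the inversion formula~\eqref{inversion formula}. Summing over $x$ and using orthogonality of characters collapses the resulting double character sum to its diagonal, where the second character is forced to be $\overline{\chi}$. A short bookkeeping computation then gives the clean identity
\begin{equation*}
\frac 1q \sum_{g \in \mathbb{F}_q} \mu(A \cap (B + P(g))) \ = \ \frac{1}{q^3} \sum_{\chi \in \widehat{\mathbb{F}_q}} \widehat{1_A}(\chi)\, \widehat{1_B}(\overline{\chi}) \sum_{g \in \mathbb{F}_q} \overline{\chi}(P(g)).
\end{equation*}

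Next I would isolate the contribution of the principal character $\chi_0$. Since $\widehat{1_A}(\chi_0) = |A|$, $\widehat{1_B}(\overline{\chi_0}) = |B|$, and $\sum_g \overline{\chi_0}(P(g)) = q$, the $\chi_0$ term equals exactly $\tfrac{|A||B|}{q^2} = \mu(A)\mu(B)$. This cancels the term subtracted in the statement, so the entire left-hand side of the lemma equals the same sum restricted to the nonprincipal characters $\chi \neq \chi_0$.

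The crux---and the single deep input---is to estimate the inner character sum for each such $\chi$. Because $\chi \neq \chi_0$, its conjugate $\overline{\chi}$ is a nontrivial additive character, and because $\deg P = d$ is coprime to $q$ (equivalently, to the characteristic $p$), the Weil bound, i.e., the Riemann hypothesis over finite fields, yields $\left| \sum_{g \in \mathbb{F}_q} \overline{\chi}(P(g)) \right| \leq (d-1)\sqrt q$. This is the one place where the hypothesis $\gcd(d,q) = 1$ is genuinely used, and it is the main obstacle only in the sense that it is the sole nonelementary ingredient; everything surrounding it is bookkeeping.

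Finally I would control the spectral weights. Applying the triangle inequality and then Cauchy--Schwarz to the sum over $\chi$, and invoking the Plancherel formula in the form $\sum_{\chi} |\widehat{1_A}(\chi)|^2 = q|A|$ and likewise for $B$, gives
\begin{equation*}
\sum_{\chi \neq \chi_0} \left| \widehat{1_A}(\chi) \right| \left| \widehat{1_B}(\overline{\chi}) \right| \ \leq \ \left( \sum_{\chi} |\widehat{1_A}(\chi)|^2 \right)^{1/2} \left( \sum_{\chi} |\widehat{1_B}(\chi)|^2 \right)^{1/2} \ = \ q\sqrt{|A||B|} \ \leq \ q^2.
\end{equation*}
Combining the prefactor $\tfrac{1}{q^3}$, the Weil estimate $(d-1)\sqrt q$, and this $q^2$ collapses everything to $\tfrac{d-1}{\sqrt q}$. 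Thus one may take $\mathcal{E}(q,d) = \tfrac{d-1}{\sqrt q}$, which is positive and, for fixed $d$, tends to $0$ as $q \to \infty$, establishing both properties.
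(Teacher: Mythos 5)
Your proposal is correct and takes essentially the same route as the paper's own proof of Lemma~\ref{eqdver2}: the same Fourier expansion collapsing to the diagonal, the same isolation of the principal character to produce $\mu(A)\mu(B)$, the same invocation of the Weil bound for the nonprincipal character sums, and the same Cauchy--Schwarz/Plancherel finish yielding $\mathcal{E}(q,d) = \frac{d-1}{\sqrt{q}}$. One pedantic note, which applies equally to the paper: the quoted Weil bound also assumes $d < q$, but this is harmless since for $d \geq q$ the trivial bound $1 \leq \frac{d-1}{\sqrt{q}}$ makes the claimed inequality automatic, and the lemma's force is in the regime $q \to \infty$ anyway.
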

\begin{remarks} These comments are parallel to their corresponding comments under Lemma~\ref{eqd}.
\begin{enumerate}
\item This lemma strengthens Lemma~\ref{eqd}, since the set of polynomials appearing in the first statement contains the set of polynomials with degree between 1 and $\mathrm{char}(\mathbb{F}_q)$.
\item The intuitive assertion of this lemma remains the same as in Lemma~\ref{eqd}; namely, the quantities $\mu(A_q \cap (B_q + P(g)))$ are eventually of size $\mu(A_q)\mu(B_q)$ on average. Moreover, the application of this lemma will be almost exactly the same.
\item Our choice of $\mathcal{E}(q,d)$ here will decay to zero more quickly than it did in Lemma~\ref{eqd}, except when $d = 2$, in which case it agrees with the previous choice.
\end{enumerate}
\end{remarks}
\begin{proof}[Solution to Problem~\ref{two elts; with coeffs}] We may suppose $q > d$. For $q$ such that $d$ is coprime with $q$, the argument proceeds as in the proof of Theorem~\ref{elem thm}, with Lemma~\ref{eqdver2} replacing Lemma~\ref{eqd}, and shows that for all sufficiently large $q$ coprime to $d$, for all $a,b \in \mathbb{F}_q^\times$, we have
\begin{equation}\label{finish it} \mathbb{F}_q \ = \ \{ ax^d + by^d : x,y\in\mathbb{F}_q\}.
\end{equation}
Otherwise, namely, if $\gcd (d,q) > 1$, then by Lemma~\ref{bezoutarg}, we have
\begin{equation*} \{ ax^d + by^d : x,y \in \mathbb{F}_q \} \ = \ \{ ax^\delta + by^\delta : x,y\in\mathbb{F}_q\},
\end{equation*} 
where $\delta = \gcd (d, q-1)$. Now, $\delta$ divides $q-1$, so $\delta$ and $q$ are coprime. This completes the argument since $d$ has finitely many factors.
\end{proof}

\begin{proof}[Proof of Lemma~\ref{eqdver2}]
Fix $q$, subsets $A,B \subseteq \mathbb{F}_q$, and a polynomial $P \in \mathbb{F}_q[x]$ with degree $d$ coprime to $q$.

We rewrite the expression
\begin{equation*} \cavg g \mu(A \cap (B + P(g)))
\end{equation*}
step by step. Recall that $\mu(S) = \frac{|S|}{q} = \cavg h 1_S(h)$, that $1_{S \cap S'}(x) = 1_{S}(x)1_{S'}(x)$, and that $1_{S+y}(x) = 1_S(x-y)$ for any $x,y \in \mathbb{F}_q$ and $S,S' \subseteq \mathbb{F}_q$. Thus,
\begin{equation*} \cavg g \mu(A \cap(B+P(g))) \ = \ \cavg g \cavg h 1_A(h)1_{B}(h-P(g)).
\end{equation*}
Inverting using \eqref{inversion formula} with $f = 1_B$ and $a = h-P(g)$, we find
\begin{equation*}
\cavg g \cavg h 1_A(h)1_{B}(h-P(g)) \ = \ \frac{1}{q^3} \sum_{g,h \in \mathbb{F}_q} \sum_{\chi \in \widehat{\mathbb{F}_q}} 1_A(h) \widehat{1_B}(\chi) \chi(P(g)-h).
\end{equation*}
After separating $\chi(P(g)-h) = \chi(P(g))\chi(-h) = \chi(P(g))\overline{\chi}(h)$ and changing the order of summation, we notice the expression for the Fourier transform of $1_A$ evaluated at the character $\overline{\chi}$:
\begin{equation*}
\frac{1}{q^3} \sum_{g,h \in \mathbb{F}_q} \sum_{\chi \in \widehat{\mathbb{F}_q}} 1_A(h) \widehat{1_B}(\chi) \chi(P(g)-h) \ = \ \frac{1}{q^3} \sum_{g, \chi} \chi(P(g))\widehat{1_B}(\chi) \underbrace{\sum_{h \in \mathbb{F}_q} 1_A(h)\overline{\chi}(h)}_{= \widehat{1_A}(\overline{\chi}) }. 
\end{equation*}
After changing the order of summation again, we have
\begin{equation*}
\frac{1}{q^3} \sum_{g, \chi} \chi(P(g))\widehat{1_B}(\chi) \widehat{1_A}(\overline{\chi}) \ = \ \frac{1}{q^3} \sum_\chi \widehat{1_A}(\overline{\chi}) \widehat{1_B}(\chi) \sum_{g} \chi(P(g)).
\end{equation*}
Let's separate the $\chi = \chi_0$ term from the whole sum. Remember, $\chi_0$ is identically 1. Thus, by definition, $\widehat{1_A}(\chi_0) = \sum_{g \in \mathbb{F}_q} 1_A(g) \chi_0(g) =  \sum_{g \in \mathbb{F}_q} 1_A(g) = |A|$, and since $\overline{\chi_0} = \chi_0$, we have $\widehat{1_B}(\overline{\chi}) = |B|$. Moreover, $\sum_{g \in \mathbb{F}_q} \chi_0(P(g)) = q$. Now $\frac{1}{q^3} \cdot |A||B|q = \mu(A)\mu(B)$. Thus
\begin{equation*}
\frac{1}{q^3} \sum_\chi \widehat{1_A}(\overline{\chi}) \widehat{1_B}(\chi) \sum_{g} \chi(P(g)) \ = \ \mu(A)\mu(B) + \frac{1}{q^3} \sum_{\chi \neq \chi_0} \widehat{1_A}(\overline{\chi}) \widehat{1_B}(\chi) \sum_{g} \chi(P(g)).
\end{equation*}

Thus, to prove the lemma, we are looking for a function $\mathcal{E}(q,d)$ to bound
\begin{equation}\label{thing}
\left\vert \frac{1}{q^3} \sum_{\chi \neq \chi_0} \widehat{1_A}(\overline{\chi}) \widehat{1_B}(\chi) \sum_{g \in \mathbb{F}_q} \chi(P(g)) \right\vert.
\end{equation}
By the triangle inequality, \eqref{thing} is bounded by
\begin{equation}\label{step 1}
\frac{1}{q^3} \sum_{\chi \neq \chi_0} \left\vert \widehat{1_A}(\overline{\chi}) \right\vert \left\vert \widehat{1_B}(\chi) \right\vert \left\vert \sum_{g \in \mathbb{F}_q} \chi(P(g))\right\vert.
\end{equation}
At this point, we await the lightning. Transcribing some folklore spawned from his own work, Weil wrote down in a short note in 1948 the relationship between some exponential sums and the Riemann hypothesis over finite fields \cite{weil48}. The specific formulation we use here appears\footnote{For an offline source, see \cite[Theorem 2E]{schmidtold} or \cite[Theorem 2.5]{schmidt}.} (with minor notation adjustments) in Kowalski's notes on exponential sums \cite[Theorem 3.2]{kow}:
\begin{theorem} Fix a polynomial $f \in \mathbb{F}_q[x]$ of degree $d$ and a nontrivial additive character $\chi$ of $\mathbb{F}_q$. If $d < q$ and $d$ is coprime to $q$, then
\begin{equation*} \left\vert \sum_{g \in \mathbb{F}_q} \chi(f(g)) \right\vert \ \leq \ (d-1)\sqrt{q}.
\end{equation*}
\end{theorem}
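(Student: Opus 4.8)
The plan is to recognize the character sum as (up to sign) a trace of Frobenius and then import Weil's Riemann Hypothesis for the associated curve. First I would put $\chi$ into standard form. Writing $q = p^r$, every nontrivial additive character has the shape $\chi(x) = \psi_0(\mathrm{Tr}_{\mathbb{F}_q/\mathbb{F}_p}(\beta x))$ for a unique $\beta \in \mathbb{F}_q^\times$, where $\psi_0(t) = e^{2\pi i t/p}$ is the canonical character of $\mathbb{F}_p$. Since the substitution $x \mapsto \beta x$ lets me absorb $\beta$ into $f$ without changing $\deg f = d$ or the hypothesis $\gcd(d,q) = \gcd(d,p) = 1$, I may assume $\chi = \psi_0 \circ \mathrm{Tr}$.

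Next I would assemble the sums over all extensions into an $L$-function. For $n \ge 1$ let $\chi^{(n)} = \chi \circ \mathrm{Tr}_{\mathbb{F}_{q^n}/\mathbb{F}_q}$ and $S_n = \sum_{g \in \mathbb{F}_{q^n}} \chi^{(n)}(f(g))$, so that $S_1$ is the sum to be bounded, and set
\[ L(T) = \exp\left( \sum_{n=1}^\infty S_n \frac{T^n}{n} \right). \]
The geometric meaning is that the $S_n$ record the $\chi$-weighted counts of $\mathbb{F}_{q^n}$-points on the Artin--Schreier curve $C : y^p - y = f(x)$; the coprimality of $d$ with $p$ makes $f$ tame at infinity, which is exactly what controls the genus of $C$.

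The crux is to show that $L(T)$ is a polynomial of degree $d-1$, say $L(T) = \prod_{j=1}^{d-1}(1 - \omega_j T)$, and that each root satisfies $|\omega_j| = \sqrt q$. The first half (rationality and the degree count) is Dwork's theorem together with the zeta-function bookkeeping for $C$; the second half is precisely the Riemann Hypothesis over finite fields. With the factorization in hand, comparing the coefficient of $T^1$ in $\log L(T) = \sum_n S_n T^n/n$ against $\sum_j \log(1 - \omega_j T)$ gives $S_1 = -\sum_{j=1}^{d-1}\omega_j$, so the triangle inequality yields
\[ \left\vert \sum_{g \in \mathbb{F}_q} \chi(f(g)) \right\vert = |S_1| \le \sum_{j=1}^{d-1} |\omega_j| = (d-1)\sqrt q, \]
which is the asserted bound.

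The main obstacle is precisely the middle step: establishing that each $|\omega_j| = \sqrt q$ \emph{is} the Riemann Hypothesis for the curve $C$, a genuinely deep input and the reason the paper imports it as lightning rather than deriving it. A reader unwilling to invoke algebraic geometry can instead reach the same estimate through the elementary Stepanov--Schmidt method (see \cite{stepanov, schmidt, schmidtold}), which manufactures auxiliary polynomials forced to vanish to high order and counts zeros directly; it is longer but self-contained. In either route the final passage from the root bound to $(d-1)\sqrt q$ is the routine part.
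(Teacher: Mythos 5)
Your outline is sound, but be aware that the paper does not prove this theorem at all: it is the one imported ingredient---the ``bolt of lightning''---quoted from Weil's note \cite{weil48} via Kowalski \cite{kow}, with offline sources \cite{schmidtold, schmidt}. What you have written is a faithful reconstruction of the standard proof that lives inside those citations: normalize $\chi = \psi_0 \circ \mathrm{Tr}$ (one small slip here: no substitution $x \mapsto \beta x$ is needed or wanted---you simply replace $f$ by $\beta f$, which has the same degree and the same coprimality with $p$), package the lifted sums $S_n$ into $L(T)$, identify them with the character-weighted point counts on the Artin--Schreier cover $y^p - y = f(x)$, use tameness at infinity (this is where $\gcd(d,p)=1$ enters) to get $\deg L = d-1$, invoke the Riemann hypothesis for curves to get $|\omega_j| = \sqrt{q}$, and read off $S_1 = -\sum_j \omega_j$; your sign bookkeeping and the final triangle inequality are correct. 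Relative to the paper, you have traded a citation for a sketch that still rests on two deep cited inputs, namely rationality-plus-degree of $L(T)$ and the Riemann hypothesis itself; what this buys is visibility of exactly where the hypotheses act---nontriviality of $\chi$ and coprimality of $d$ with $q$ are what force $L(T)$ to be a polynomial of degree $d-1$, hence the constant $d-1$ in the bound---whereas the paper's choice keeps its Fourier-analytic section short with the difficulty honestly labeled as external. Your closing pointer to the Stepanov--Schmidt method as an elementary alternative matches the paper's own footnote citing \cite{stepanov, schmidt, schmidtold}. If you wanted to upgrade the sketch to a complete argument, the step demanding real work is $\deg L(T) = d-1$: quoting Dwork gives only rationality, and the degree must come from the genus $\frac{(p-1)(d-1)}{2}$ of the cover together with the factorization of its zeta function over the $p-1$ nontrivial characters of $\mathbb{F}_p$ (or, in modern language, from a conductor computation at the totally ramified place at infinity).
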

\noindent This theorem applies readily to part of \eqref{step 1}. Thus we find that \eqref{step 1} is bounded by
\begin{equation}\label{step 2}
\frac{(d-1)\sqrt{q}}{q^3} \sum_{\chi \neq \chi_0} \left\vert \widehat{1_A}(\overline{\chi}) \right\vert \left\vert \widehat{1_B}(\chi) \right\vert.
\end{equation}

Our final step uses the Plancherel formula. In preparation, we massage part of \eqref{step 2} by padding it with the $\chi = \chi_0$ term and applying Cauchy--Schwarz to see that
\begin{align}
\sum_{\chi \neq \chi_0} \left\vert \widehat{1_A}(\overline{\chi}) \right\vert \left\vert \widehat{1_B}(\chi) \right\vert \ & \leq \ \sum_{\chi} \left\vert \widehat{1_A}(\overline{\chi}) \right\vert \left\vert \widehat{1_B}(\chi) \right\vert \nonumber \\
& \leq \ \left( \sum_{\chi} \left\vert \widehat{1_A}(\chi) \right\vert^2 \sum_{\chi'} \left\vert \widehat{1_B}(\chi') \right\vert^2 \right)^\frac{1}{2} \nonumber \\
& = \ \left( q^4\mu(A)\mu(B) \right)^\frac{1}{2} \label{cheeky}\\
& \leq \ q^2, \nonumber
\end{align}
where \eqref{cheeky} holds by Plancherel since
\begin{equation*}
\sum_{\chi} \left\vert \widehat{1_A}(\chi) \right\vert^2 \ = \ q\lr{\widehat{1_A},\widehat{1_A}} \ = \ q^2 \lr{1_A,1_A} \ = \ q^2\mu(A).
\end{equation*}
Thus \eqref{step 2} is bounded by
\begin{equation*}
\frac{(d-1)\sqrt{q}}{q^3}\cdot q^2 \ = \ \frac{d-1}{\sqrt{q}},
\end{equation*}
so take $\mathcal{E}(q,d) = \frac{d-1}{\sqrt{q}}$. Obviously $\lim_{q\to\infty} \mathcal{E}(q,d) = 0$ for fixed $d$.
\end{proof}

In this approach we depend on serious, now classical, knowledge about certain exponential sums, a definite step up in difficulty over Gauss sums. The interest in these kinds of exponential sums arose in part as an outgrowth of interest in the problems of cyclotomy and diagonal equations but has since taken on its own life in number theory. We have certainly not done justice to the Riemann hypothesis over finite fields here; it is a deep topic. For a historical perspective, see \cite{roq}, and for a mathematical discussion see, for example, \cite{IR}. What we have done is examined several facets of a pretty problem, Problem~\ref{two elts; with coeffs}, found in the same deposit as Fermat's last theorem.

\section*{Acknowledgment.}
We thank the referees for many useful comments and especially the referee who suggested several improvements to the structure of Section~\ref{sec 2} and pointed out what are now Proposition 2, the remark under Theorem~\ref{elem thm}, and part of the first remark under Lemma~\ref{eqd}.

%
%
%
%
\vfill\eject


\begin{thebibliography}{1}


\bibitem{cauchy} Cauchy, A.-L. (1813). Recherches sur les nombres. \textit{J. \'{E}c. polytech. Math.} 9: 99--116; \textit{Oeuvres} (II), vol. 1, pp. 39--63, Gauthier-Villars, Paris, 1905.


\bibitem{dh} Davenport, H., Hasse, H. (1934). Die Nullstellen der Kongruenzzetafunktionen in gewissen zyklischen F\"allen. \textit{J. Reine Angew. Math.} 172: 151--182.

\bibitem{dw} Dedekind, R., Weber, H. M. (1882). Theorie der algebraischen Functionen einer Ver\"anderlichen. \textit{J. Reine Angew. Math.} 92: 181--290. 

\bibitem{d2} Dickson, L. E. (1909). On the congruence $x^n + y^n + z^n \equiv 0$ mod $p$. \textit{J. Reine Angew. Math.} 135: 134--141.

\bibitem{d1} Dickson, L. E. (1909). Lower limit for the number of sets of solutions of $x^e + y^e + z^e \equiv 0$ mod $p$. \textit{J. Reine Angew. Math.} 135: 181--188. 

\bibitem{d3} Dickson, L. E. (1926/27). Ternary quadratic forms and congruences. \textit{Ann. of Math. (2)}. 28(1-4): 333--341.

\bibitem{bookreview} Dickson, L. E. (1928). Review of Robert Fricke's \textit{Lehrbuch der Algebra}. \textit{Bull. Amer. Math. Soc.} 34: 531.

\bibitem{hr} Hardy, G. H., Rogosinski, W. W. (2013). \textit{Fourier Series}, revised ed. Mineola, NY: Dover.

\bibitem{hv1} Hua, L. K., Vandiver, H. S. (1949). Characters over certain types of rings, with applications to the theory of equations in a finite field. \textit{Proc. Nat. Acad. Sci. U.S.A.} 35: 94--99.

\bibitem{hv2} Hua, L. K., Vandiver, H. S. (1949). On the nature of the solutions of certain equations in a finite field. \textit{Proc. Nat. Acad. Sci. U.S.A.} 35: 481--487.

\bibitem{hurwitz} Hurwitz, A. (1909). \"Uber die Kongruenz $ax^e + by^e + cz^e \equiv 0$ mod $p$. \textit{J. Reine Angew. Math.} 136: 272--292.

\bibitem{IR} Ireland, K., Rosen, M. (1990). \textit{A Classical Introduction to Modern Number Theory}, 2nd ed. New York, NY: Springer-Verlag.

\bibitem{joly} Joly, J.-R. (1973). \'{E}quations et vari\'et\'es alg\'ebriques sur un corps fini. \textit{Enseign. Math.} 19(2): 1--117.

\bibitem{korner} K\"orner, T. W. (1989). \textit{Fourier Analysis}. New York: Cambridge Univ. Press.

\bibitem{kow} Kowalski, E. (2018). Exponential sums over finite fields: elementary methods. Available at: people.math.ethz.ch/\%7Ekowalski/exp-sums.pdf

\bibitem{kummer1} Kummer, E. E. (1850). Allgemeine Reciprocit\"atsgesetze f\"ur beliebig hohe Potenzreste. \textit{Monatsber. K\"onigl. Akad. Wiss. Berlin} 1850: 154--165; \textit{Collected Papers}, vol. 1, pp. 345--357, Springer-Verlag, Berlin-Heidelberg-New York, 1975.

\bibitem{kummer2} Kummer, E. E. (1851). M\'emoire sur la th\'eorie des nombres complexes compos\'es de racines de l'unit\'e et de nombres entiers. \textit{J. Math. Pures Appl.} 16: 377--498; \textit{Collected Papers}, vol. 1, pp. 363--484, Springer-Verlag, Berlin-Heidelberg-New York, 1975.

\bibitem{kummer3} Kummer, E. E. (1852). \"Uber die Erg\"anzungss\"atze zu den allgemeinen Reciprocit\"atsgesetzen. \textit{J. Reine Angew. Math.} 44: 93--146; \textit{Collected Papers}, vol. 1, pp. 485--538, Springer-Verlag, Berlin-Heidelberg-New York, 1975.

\bibitem{lag} Lagrange, J.-L. (1770). D\'emonstration d'un th\'eor\`eme d'arithm\'etique. \textit{Nouv. M\'emoires Acad. Roy. Berlin} 1770: 123--133; \textit{Oeuvres}, vol. 3, pp. 189--201, Gauthier-Villars, Paris, 1869.


\bibitem{LN} Lidl, R., Niederreiter, H. (1997). \textit{Finite Fields}, 2nd ed. New York, NY: Cambridge Univ. Press.

\bibitem{roq} Roquette, P. (2018). \textit{The Riemann Hypothesis in Characteristic $p$ in Historical Perspective}. Lecture Notes in Mathematics, 2222. Cham, Switzerland: Springer.

\bibitem{schap} Schappacher, N. On the History of Hilbert's Twelfth Problem: A Comedy of Errors, \textit{Mat\'eriaux pour l'histoire des math\'ematiques au XX\textsuperscript{e} si\`ecle}, Nice, 1996, S\'emin. Congr. 3 (Soci\'et\'e Math\'ematique de France, Paris, 1998), 243--273.

\bibitem{schmidtold} Schmidt, W. (1976). \textit{Equations over Finite Fields: an Elementary Approach}, 1st ed. Lecture Notes in Mathematics, 536. Berlin-New York: Springer-Verlag.

\bibitem{schmidt} Schmidt, W. (2004). \textit{Equations over Finite Fields: an Elementary Approach}, 2nd ed. Heber City, UT: Kendrick Press.

\bibitem{ss} Shakarchi, R., Stein, E. M. (2003). \textit{Fourier Analysis: An Introduction}. Princeton, NJ: Princeton Univ. Press.

\bibitem{singh} Singh, S. (1973). Analysis of each integer as sum of two cubes in a finite integral domain. \textit{Indian J. Pure Appl. Math.} 6(1): 29--35.

\bibitem{skolem} Skolem, T. (1937). Zwei S\"atze \"uber kubische Kongruenzen. \textit{Norske Vid. Selsk. Forh.} 10: 89--92.

\bibitem{small} Small, C. (1977). Sums of powers in large finite fields. \textit{Proc. Amer. Math. Soc.} 65(1): 35--36.

\bibitem{stepanov} Stepanov, S. A. (1994). \textit{Arithmetic of Algebraic Curves}. New York: Springer.

\bibitem{storer} Storer, T. (1967). \textit{Cyclotomy and difference sets}. Lectures in Advanced Mathematics, 2. Chicago, IL: Markham Publishing Co. 


\bibitem{weber} Weber, H. M. (1899). \textit{Lehrbuch der Algebra}, Vol. 2, 3rd. ed. AMS Chelsea.

\bibitem{weil48} Weil, A. (1948). On some exponential sums. \textit{Proc. Nat. Acad. Sci. U.S.A.} 34: 204--207.

\bibitem{weil} Weil, A. (1949). Numbers of solutions of equations in finite fields. \textit{Bull. Amer. Math. Soc.} 55: 497--508.

\bibitem{weyl} Weyl, H. (1916). \"Uber die Gleichverteilung von Zahlen mod. Eins. \textit{Math. Ann.} 77(3): 313--352.

\end{thebibliography}
\end{document}